\renewcommand\section{\@startsection {section}{1}{\z@}%
                                   {-3.5ex \@plus -1ex \@minus -.2ex}%
                                   {2.3ex \@plus.2ex}%
                                   {\centering\normalfont\bf}}
 \numberwithin{equation}{section}
\numberwithin{equation}{section}
\numberwithin{equation}{section}
\theoremstyle{plain}
\newtheorem{thm}{Theorem}[section]
\newtheorem{lemma}[thm]{Lemma}
\newtheorem{pro}[thm]{Proposition}
\newtheorem*{thm*}{Theorem}
 \newtheorem{Coj}[thm]{Conjecture}
\begin{document}
\title{On the connectedness of planar self-affine sets}
\author{Jing-Cheng Liu$^{1}$, Jun Jason Luo$^{2}$ and Heng-wen Xie$^{1}$}
\address{1 Key Laboratory of High Performance Computing and Stochastic Information Processing (Ministry of Education of China), College of Mathematics and
Computer Science, Hunan Normal University, Changsha, Hunan 410081, China} \email{liujingcheng11@126.com}
\address{2 Department of Mathematics, Shantou University, Shantou 515063, Guangdong, P.R. China}\email{jasonluojun@gmail.com}\email{luojun@stu.edu.cn}

\date{\today}
\keywords {Connectedness, digit set, self-affine set, self-affine tile, neighbor.}
\subjclass[2010]{Primary 28A80; Secondary 52C20, 52C45.}
\thanks{ The research is supported in part by the NNSF of China (No.11171100, No.11301175, No.11301322), the Hunan Provincial NSF (No.13JJ4042),
the Special Fund for Excellent Doctoral Dissertation of Hunan Provincial (No.YB2012B025), Specialized Research Fund for the Doctoral Program of Higher Education of China (20134402120007), Foundation for Distinguished Young Talents in Higher Education of Guangdong (2013LYM\_0028) and STU Scientific Research Foundation for Talents (NTF12016).}

\begin{abstract}  In this paper, we consider the connectedness of planar self-affine set $T(A,\mathcal{D})$ arising from an integral expanding
matrix  $A$ with characteristic polynomial $f(x)=x^2+bx+c$ and a digit set $\mathcal{D}=\{0,1,\dots, m\}v$. The necessary and sufficient conditions only depending on $b,c,m$ are  given for the $T(A,\mathcal{D})$ to be connected. Moreover, we also consider the case that ${\mathcal D}$ is non-consecutively collinear.
\end{abstract}

\maketitle

\section{\bf Introduction}

Let  $A\in M_n({\mathbb{Z}})$  be an expanding $n\times n$ integral matrix, i.e., all eigenvalues of $A$ have moduli  strictly greater than $1$. Let $\mathcal{D} = \{d_1, \dots , d_m\}$ be a finite set of $m$ distinct vectors on ${\mathbb R}^n$. We call ${\mathcal D}$  a {\it digit set}. Then the maps
\begin{equation*}
S_i(x)= A^{-1}(x + d_i),\quad  1\leq i\leq m,
\end{equation*}
are  contractive under a suitable norm in $\mathbb{R}^n$ \cite{LW},  and it is well-known that there exists a unique non-empty compact set $T:=T(A,\mathcal{D})$ satisfying the set-valued functional equation
\begin{equation}\label {(1.2)}
T=\bigcup_{i=1}^m S_i(T).
\end{equation}
Usually $T$ can also be written as
\begin{equation*}
T= A^{-1}(T+{\mathcal D}) =\left\{\sum_{i=1}^\infty A^{-1}d_{j_i}: d_{j_i}\in \mathcal{D}\right\}.
\end{equation*}
The $T$ is called the {\it self-affine set (or attractor)} of the iterated function system (IFS) $\{S_i\}_{i=1}^m$.  We call $T$ a self-affine tile if it has positive Lebesgue measure and the union in \eqref {(1.2)} is essentially disjoint, i.e., the intersection $(T + d_i)\cap(T + d_j)$ has zero Lebesgue measure for $i\neq j$. In this situation, $T^\circ\neq \emptyset$ and $c:=|\det(A)|=m$.

The study on the topological properties of self-affine sets/tiles $T(A, {\mathcal D})$ on ${\mathbb R}^n$ is an interesting topic in fractal geometry, tiling theory and even canonical number systems. In particular, connectedness, disk-likeness (i.e., homeomorphic to a closed disk in the case $n=2$) and the boundary structures have been investigated extensively.  Gr\"{o}chenig and Haas \cite{GH} as well as Hacon {\it et al.} \cite{HSV} discussed a few special connected self-affine tiles.  Lau and his coworkers (\cite{HKL}, \cite{KL}, \cite{KLR}, \cite{LL}) systematically studied a large class of connected  self-affine tiles arising from so-called consecutive collinear digit set, i.e., of the form ${\mathcal D}=\{0,1,\dots, c-1\}v$, and their disk-likeness in the plane. They observed an algebraic property of the characteristic polynomial of $A$ to determine the connectedness of $T(A, \mathcal {D})$.  Akiyama and Gjini \cite{AG} also focused on this algebraic property by canonical number systems. On the other hand, Bandt and Wang \cite{BW} and Leung and one of the authors \cite{LLu3} also concerned the disk-like self-affine tiles or the boundary properties by using a technique of neighbor graphs.

Recently, on ${\mathbb R}^2$,  Kirat \cite{Ki} and Deng and Lau \cite{DL}  found out the connected self-affine tiles  $T(A,\mathcal{D})$ among classes of data $(A,\mathcal{D})$ with non-collinear digit sets ${\mathcal D}$ and characterized the disk-like ones. Leung and one of the authors (\cite{LLu1}, \cite{LLu2}) were also interested in  the collinear digit set $\{0,1,m\}v$ and the non-collinear digit set $\{0,v, mAv\}$ with the restriction of  $|\det A|=3$.

In this paper, we  study  more general self-affine sets $T(A,\mathcal{D})$ on ${\mathbb R}^2$ arising from an integral expanding matrix $A$ with characteristic polynomial  $f(x)=x^2+bx+c$ and the consecutive collinear digit set $\mathcal{D}=\{0,1,\dots, m\}v$. We obtain the following main results.

\begin{thm} \label{thm1.1}
Let the characteristic polynomial of $A$ be $f(x)=x^2+bx+c$ and a digit set $\mathcal{D}=\{0,1,\dots, m\}v$
where $m\geq 1$ and $v\in \mathbb{R}^2$ such that $\{v,  Av\}$ are linearly independent. If $\Delta=b^2-4c\geq0$ and
the eigenvalues of $A$ have moduli  $\ge 2$, then

 \medskip
{\rm(i)}  if $c=4$, then $ T(A,\mathcal{D})$ is connected if and only if $m\geq 2$;
 \medskip

{\rm(ii)} otherwise $c\ne 4$, then $ T(A,\mathcal{D})$ is connected if and only if

 \begin{equation*}
m\geq \left\{
\begin{array}{ll}
\max\{c-|b|+1,|b|-1\} & \quad  c>0 \\  \\
|c|-|b|-1 & \quad  c<0.
\end{array}
\right.
\end{equation*}
\end{thm}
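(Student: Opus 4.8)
The plan is to reduce the connectedness question to a discrete, combinatorial criterion and then verify that criterion case-by-case according to the signs and sizes of $b,c$. I would begin by recalling the standard criterion of Hata/Kigami type: for an IFS whose attractor is $T=\bigcup_{i=0}^m S_i(T)$, the set $T$ is connected if and only if the associated "digit graph" on $\{0,1,\dots,m\}$ is connected, where $i\sim j$ precisely when $S_i(T)\cap S_j(T)\neq\emptyset$, equivalently when $(T+i v)\cap(T+(j)v)\neq\emptyset$. Since $\mathcal D$ is consecutive and collinear, the graph is connected if and only if consecutive digits touch, i.e. if and only if $T\cap(T+v)\neq\emptyset$; and by the self-affine structure this happens iff there is a solution, with digits in $\{-m,\dots,m\}$, of an $A$-adic expansion of $v$ — more precisely iff $v\in (T-T)\cap$ (the lattice/module generated appropriately), so the real content is to decide when $v$ lies in the difference set $T-T$. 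The key reduction is therefore: $T(A,\mathcal D)$ is connected $\iff v\in T-T$, and since $T-T=T(A,\mathcal D-\mathcal D)=T(A,\{-m,\dots,m\}v)$, this is a question about representability of $v$ in base $A$ with digits in $\{-m,\dots,m\}v$.

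Next I would pass to coordinates adapted to $A$: since $\{v,Av\}$ is a basis of $\mathbb R^2$, write everything in this basis, so that $Av$ has coordinates $(0,1)$ and $A^2 v=-bAv-cv$ has coordinates $(-c,-b)$; thus $A$ in the basis $\{v,Av\}$ is the companion matrix $\begin{pmatrix}0&-c\\1&-b\end{pmatrix}$, and $v=(1,0)$. The condition $v\in T-T$ becomes: $(1,0)=\sum_{k\ge1}A^{-k}(\varepsilon_k,0)$ with $\varepsilon_k\in\{-m,\dots,m\}$. Multiplying through by $A$, this is equivalent to the existence of a sequence $\varepsilon_1,\varepsilon_2,\dots$ in $\{-m,\dots,m\}$ with $\varepsilon_1 = $ (something) and a tail condition; one standardly rephrases "$v\in T-T$" as the existence of an eventually-periodic such expansion, and ultimately as a reachability condition in a finite graph whose vertices are the finitely many possible "remainders" $w$ with $w\in T-T$ and $w=A^{-1}(w'+\text{digit}\cdot v)$. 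This is exactly the neighbor-graph / radix-expansion machinery referenced in the introduction (Bandt–Wang, Leung–Luo). The eigenvalue hypothesis $|\lambda_i|\ge2$ together with $\Delta\ge0$ (real eigenvalues) is what keeps this remainder set finite and controllable, and pins down $|b|,c$ to finitely many shapes relative to $m$.

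With the problem in this form, the proof splits into two directions. For sufficiency (the stated lower bounds on $m$ imply connectedness), I would exhibit an explicit $A$-adic expansion of $v$ with digits in $\{-m,\dots,m\}$: for $c>0$ one expects something like $v=A^{-1}((\text{digit})v)+A^{-1}(\cdots)$ unwinding via the relation $A^2 v=-bAv-cv$, producing a (pre)periodic string whose digits are bounded in absolute value by $\max\{c-|b|+1,|b|-1\}$ (the two competing bounds coming from the two "directions" $\pm v$ and from the carry caused by the $-b$ term); for $c<0$ the sign change in the recursion improves the bound to $|c|-|b|-1$. The special value $c=4$ is the degenerate case $\Delta=b^2-16$; when $b=\pm4$ one has $\Delta=0$, a repeated eigenvalue $\mp2$, and the generic formula would give $m\ge 1$, but a separate argument (an obstruction coming from the Jordan/scaling structure, or from a parity/lattice invariant) shows $m=1$ actually fails and $m\ge2$ is needed — this will be the main obstacle, since it requires showing \emph{non}-connectedness, i.e. that $v\notin T-T$, which one does by finding an invariant half-plane or a separating functional, or by checking that the finite remainder graph has no cycle through the required vertex. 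For necessity in the generic case one argues the contrapositive similarly: if $m$ is below the bound, the finite digit/neighbor graph is disconnected — equivalently, project onto the dominant eigendirection and show the images $S_0(T),\dots,S_m(T)$ split into two groups with disjoint projections, so the pieces cannot be glued. I would organize this as: (1) state and prove the reduction "$T$ connected $\iff v\in T-T$"; (2) set up the companion-matrix coordinates and the finite remainder graph; (3) sufficiency by explicit expansions, treating $c>0$, $c<0$ separately, and $c=4$ by hand; (4) necessity by the projection/separation argument plus the $c=4$ obstruction. The hard part will be step (3)–(4) for $c=4$ and, more generally, making the explicit expansions close up (proving periodicity with the claimed digit bound) rather than drift off to infinity.
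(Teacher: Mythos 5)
Your skeleton matches the paper's: reduce to the criterion ``$T$ connected $\iff v\in T-T$'' (the paper's Lemma 2.1, from Hata/Kirat--Lau), work in the companion-matrix basis $\{v,Av\}$, prove sufficiency by exhibiting explicit $A$-adic expansions of $v$, and prove necessity by showing no admissible expansion exists when $m$ is too small. But two genuine gaps remain. First, for necessity you propose a ``projection onto the dominant eigendirection / separating functional'' argument, which is left entirely undeveloped; the paper instead writes $A^{-i}v=\alpha_i v+\beta_i Av$ and computes the exact sums $\tilde\alpha=\sum_i|\alpha_i|$ and $\tilde\beta=\sum_i|\beta_i|$ in closed form (Lemma 2.5: $\tilde\alpha=\frac{|b|-1}{c-|b|+1}$, $\tilde\beta=\frac{1}{c-|b|+1}$ for $c>0$, etc.), then applies the neighbor-generating step $l\mapsto Al-b_1v$ to a hypothetical intersection $(\ell_2-\ell_1)v\in T-T$ and reads off the two coordinate bounds. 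It is precisely the two coordinates ($v$-component vs.\ $Av$-component) that produce the two competing thresholds $c-|b|+1$ and $|b|-1$; a single projection onto one eigendirection does not obviously deliver both, and without the exact $\ell^1$-sums you cannot get the sharp constants. This computation is the real technical core of the necessity direction and is missing from your plan.

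Second, you have misdiagnosed the exceptional case $c=4$. There $|b|=4$ (forced by $\Delta\ge0$ and both eigenvalues having modulus $\ge2$), so the generic formula reads $m\ge\max\{c-|b|+1,\,|b|-1\}=\max\{1,3\}=3$ --- not $m\ge1$ as you state. The theorem asserts the true threshold is $2$, so the generic bound \emph{over}-demands rather than under-demands; the extra work is a \emph{sufficiency} statement at $m=2$, which the paper settles with the special identity $I=2A^{-1}+2A^{-2}-2\sum_{i\ge3}A^{-i}$ coming from $A^3-3A^2+4I=0$ for $f(x)=x^2-4x+4$. The necessity of $m\ge2$ falls out of the same $\tilde\beta$ bound as the generic case and needs no separate obstruction. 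Your plan budgets the main effort for proving non-connectedness at $m=1$, which is the easy half, and omits the special expansion that is actually required. Finally, even in the generic sufficiency step you only gesture at ``something like'' an expansion; the concrete identity $(A-I)(A+(b+1)I)=-(c+b+1)I$ (and its analogue for $c<0$ using $-A$) is what makes the digits land exactly in $\{-(c-|b|+1),\dots,|b|-1\}$, and producing it (together with the reduction of $b>0$ to $b<0$ via $A\mapsto-A$) is a necessary part of a complete proof.
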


\medskip

If $\Delta=b^2-4c<0$, the eigenvalues of $A$ are complex numbers, the self-affine set $T(A, {\mathcal D})$ becomes very complicated.  However, under certain situations, we still obtain some interesting results.

\medskip

\begin{thm} \label{thm1.2}
Let the characteristic polynomial of  $A$ be $f(x)=x^2+bx+c$ and a digit set $\mathcal{D}=\{0,1,\dots, m\}v$
where $m\geq 1$ and $v\in \mathbb{R}^2$ such that $\{v,  Av\}$ are linearly independent.
If  $\Delta=b^2-4c<0$, then $ T(A,\mathcal{D})$ is connected if and only if
\begin{equation*}
m\geq \left\{
\begin{array}{ll}
\max\{c-|b|+1, |b|-1\} & \quad  b^2=3c \\   \\
c-|b|+1 & \quad  b^2=2c, \  b^2=c \\   \\
c-1 & \quad  b=0.
\end{array}
\right.
\end{equation*}
\end{thm}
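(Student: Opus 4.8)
The plan is to reduce the connectedness of $T(A,\mathcal D)$ to the single geometric condition $v\in T-T$, to realise $T-T$ as a genuinely \emph{self-similar} set by using that under the hypotheses $A$ has finite order up to a scalar, and then to read off the threshold for $m$ from the convex hull of the corresponding digit set (for necessity) and from one explicit digit expansion (for sufficiency).

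First I would invoke the standard criterion for a consecutive collinear digit set: $T(A,\mathcal D)$ is connected if and only if $T\cap(T+v)\ne\emptyset$, i.e.\ $v\in T-T$. Passing to the basis $\{v,Av\}$, which is legitimate because $\{v,Av\}$ is independent, $A$ becomes the companion matrix of $f$, we have $v=(1,0)$, and $E:=T-T$ satisfies $E=A^{-1}\bigl(E+\{-m,\dots,m\}v\bigr)$. Now write a complex eigenvalue of $A$ as $\rho=\sqrt c\,e^{i\theta}$ with $\theta\in(0,\pi)$, so $\cos\theta=-b/(2\sqrt c)$; the hypotheses $b^2\in\{3c,2c,c\}$ and $b=0$ give precisely $|\cos\theta|\in\{\tfrac{\sqrt3}{2},\tfrac1{\sqrt2},\tfrac12,0\}$, hence $N\theta\in\pi\mathbb Z$ with $N=6,4,3,2$ respectively, so $\rho^N\in\mathbb R$ and, $A$ being diagonalisable, $A^N=\varepsilon\kappa I$ with $\varepsilon=\pm1$ and $\kappa:=c^{N/2}\in\mathbb Z$. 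Iterating the self-affine identity $N$ times and using $A^{-N}=\varepsilon\kappa^{-1}I$ together with $-E=E$ and $-\mathcal S=\mathcal S$, one obtains
\[
E=\tfrac1\kappa\bigl(E+\mathcal S\bigr),\qquad \mathcal S:=\Bigl\{\,\textstyle\sum_{s=0}^{N-1}\ell_s A^s v:\ \ell_s\in\{-m,\dots,m\}\,\Bigr\},
\]
a finite symmetric subset of the lattice $\mathbb Zv+\mathbb ZAv$; thus $E$ is self-similar with ratio $1/\kappa$ and digit set $\mathcal S$, equivalently $E=\{\sum_{j\ge1}\kappa^{-j}s_j:\ s_j\in\mathcal S\}$.

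For necessity, $E\subseteq\sum_{j\ge1}\kappa^{-j}\,\mathrm{conv}(\mathcal S)=\tfrac1{\kappa-1}\mathrm{conv}(\mathcal S)=\tfrac m{\kappa-1}Z$, where $Z=\sum_{s=0}^{N-1}[-1,1]A^sv$ is a zonotope; hence $v\in E$ forces $\tfrac{\kappa-1}{m}v\in Z$, i.e.\ $m\ge(\kappa-1)/w$ where $w:=\max\{t>0:tv\in Z\}$. Computing $w$ from the explicit vectors $A^sv$ — a small linear program, with a short case split according to the sign of $b$ — and rounding up yields exactly $\max\{c-|b|+1,\ |b|-1\}$ when $b^2=3c$ and $c-|b|+1$ in the remaining cases, which is the claimed bound. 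For sufficiency I would, conversely, show that when $m$ satisfies the bound the lattice point $(\kappa-1)v$ already lies in $\mathcal S$; concretely one solves $\sum_{s=0}^{N-1}\ell_s A^s v=(\kappa-1)v$ in integers $\ell_s$ with $|\ell_s|\le m$, which can be done explicitly in each of the four regimes. Taking every digit equal to $s^\ast:=(\kappa-1)v$ then gives $\sum_{j\ge1}\kappa^{-j}s^\ast=\tfrac{s^\ast}{\kappa-1}=v\in E$, so $T(A,\mathcal D)$ is connected.

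I expect the sufficiency step to be the main obstacle. The constraint $v\in E\subseteq\tfrac1{\kappa-1}\mathrm{conv}(\mathcal S)$ is merely convex, but $E$ is a genuine fractal, so it is not automatic that the boundary-type lattice point $(\kappa-1)v$ — which sits near $\partial\,\mathrm{conv}(\mathcal S)$ exactly when $m$ equals the threshold — is actually attained by the digit set $\mathcal S$ and not just by its convex hull; showing this requires the Diophantine analysis to be carried out carefully at the critical value of $m$. That verification, together with the companion exact computation of the zonotope width $w$ (and hence of the thresholds), is where essentially all of the case work — the four regimes, and within each the sign of $b$ — will go.
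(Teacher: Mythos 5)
Your reduction of $E=T-T$ to a genuinely self-similar set with scalar ratio $1/\kappa$ (using $A^N=\varepsilon\kappa I$ for $N=6,4,3,2$) is a nice reformulation, and your zonotope argument for necessity is a legitimate alternative to the paper's route (the paper instead computes $\tilde\alpha=\sum_i|\alpha_i|$ explicitly from $r_{1,2}=c^{-1/2}e^{\pm i\theta}$ and bounds the first coordinate of the derived neighbor $-(rc+b_2)v-(rb+b_1)Av$; your two-dimensional membership condition $\tfrac{\kappa-1}{m}v\in Z$ packages the $\tilde\alpha$- and $\tilde\beta$-estimates into one linear program, and in the cases I checked it reproduces the stated thresholds — though you have not actually carried out the LP case work that your necessity claim rests on).

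The sufficiency step, however, has a genuine gap, and it is exactly where you predicted trouble: the lattice point $(\kappa-1)v$ is in general \emph{not} in $\mathcal S$ at the critical value of $m$, so the constant digit expansion $v=\sum_{j\ge1}\kappa^{-j}(\kappa-1)v$ is unavailable. Concretely, take $b=-6$, $c=12$ (so $b^2=3c$, $N=6$, $\kappa=c^3=1728$, threshold $m=c-|b|+1=7$). In coordinates with respect to $\{v,Av\}$ one has $A^0v,\dots,A^5v=(1,0),(0,1),(-12,6),(-72,24),(-288,72),(-864,144)$, and maximizing the first coordinate of $\sum_{k=0}^5\ell_kA^kv$ over integers $|\ell_k|\le 7$ subject to vanishing second coordinate gives the maximum $1723$, attained at $(\ell_0,\dots,\ell_5)=(7,6,7,7,7,-5)$; since $1723<1727=\kappa-1$, the equation $\sum_k\ell_kA^kv=(\kappa-1)v$ has no admissible solution, yet the theorem asserts connectedness for $m=7$. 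So $v\in E$ must be witnessed by a \emph{non-constant} sequence $(s_j)\subset\mathcal S$, and finding one is essentially the original problem again. The paper avoids this entirely with an algebraic identity: for $b<0$, $(A-I)(A+(b+1)I)=-(c+b+1)I$ yields
\begin{equation*}
v=(|b|-1)A^{-1}v-(c-|b|+1)\sum_{i=2}^{\infty}A^{-i}v\in T-T
\end{equation*}
whenever $m\ge\max\{c-|b|+1,|b|-1\}$ (with the $b>0$ case handled by the symmetry $A\mapsto -A$, and $b=0$ by the Height Reducing Property). You would need to replace your constant-digit argument by such an explicit eventually-periodic $A$-adic expansion for the proof to close.
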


\medskip

On the other hand,  when the characteristic polynomial of $A$ is of the special form $f(x)=x^2-(p+q)x+pq$ where $|p|,|q|\geq 2$ are integers, and the digit set $\mathcal{D}$ may be  non-consecutively collinear. By letting $f_1(x)=x^2\pm 4x+4$ and  $f_2(x)=x^2\pm7x+12$, we can  characterize the connectedness of the associated  self-affine tile $T(A, {\mathcal D})$ through the following theorem, which is also a generalization of \cite{LLu1}.

\medskip

\begin{thm} \label{thm1.3}
Let the characteristic polynomial of $A$ be $f(x)=x^2-(p+q)x+pq$ and a digit set $\mathcal{D}=\{0,1, \dots, |pq|-2, |pq|-1+s\}v$
where $s\geq 0$, $|p|,|q|\geq 2$ are integers and $v\in \mathbb{R}^2$ such that $\{v,  Av\}$ are linearly independent. Then

 \medskip

{\rm(i)} if  $f\neq f_1, f_2$, then $T(A,\mathcal{D})$ is connected if and only if $s=0$;

 \medskip

{\rm(ii)} if  $f= f_1$ or $f_2$, then $T(A,\mathcal{D})$ is connected if and only if $s=0$ or $1$.
\end{thm}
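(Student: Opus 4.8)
The strategy is to combine Hata's connectedness criterion with a one–dimensional analysis of the set $(T-T)\cap\mathbb Zv$, which I will reformulate as a problem about simultaneous radix expansions in the two bases $p$ and $q$.

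\emph{Step 1: reduction to a one–dimensional problem.} Since $\mathcal D$ is collinear, Hata's theorem gives that $T(A,\mathcal D)$ is connected if and only if the digit graph on $\mathcal D$ — with $d$ joined to $d'$ precisely when $(T+d)\cap(T+d')\neq\emptyset$, i.e. when $d-d'\in T-T$ — is connected. Put $n=|pq|=|\det A|$, so $\mathcal D=\{0,1,\dots,n-2,\,n-1+s\}v$. One first records that $v\in T-T$ for every admissible datum: for $s=0$ this is contained in Theorem~\ref{thm1.1} (and is classical for the standard tile), and for $s\ge1$ it follows by exhibiting a short, eventually periodic expansion of $v$ with digits in $\mathcal D-\mathcal D$. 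Hence $0,1,\dots,n-2$ lie in one connected block of the digit graph, and the graph is connected if and only if the extra vertex $(n-1+s)v$ is adjacent to that block, that is, if and only if $kv\in T-T$ for some $k\in\{s+1,s+2,\dots,n-1+s\}$.

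\emph{Step 2: radix reformulation.} After a linear change of coordinates one may take $v=(1,0)$ and $A$ to be the companion matrix of $f$, so that $T-T=\{\sum_{i\ge1}\eta_iA^{-i}v:\eta_i\in\mathcal D-\mathcal D\}$. Expanding $v$ in the eigenbasis of $A$ (or the generalized eigenbasis when $p=q$) yields
\[
kv\in T-T\iff \exists\,(\eta_i)_{i\ge1}\subseteq\mathcal D-\mathcal D:\quad \sum_{i\ge1}\eta_i p^{-i}=\sum_{i\ge1}\eta_i q^{-i}=k,
\]
with the second equation replaced by $\sum_{i\ge1}i\,\eta_i\,p^{-i}=0$ in the repeated–root case. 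A short count shows $\mathcal D-\mathcal D=\{-M,\dots,M\}$ with $M=n-1+s$ when $s\le n-2$, and has a transparent two–block structure (small digits of modulus $\le n-2$, large digits of modulus in $[s+1,n-1+s]$) for larger $s$.

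\emph{Step 3: the two directions.} Sufficiency for $s=0$ uses $v\in T-T$ directly; sufficiency for $f=f_1,f_2$ and $s=1$ is proved by exhibiting explicit eventually constant solutions of the two equations realising some $k\in\{s+1,\dots,n-1+s\}$ — for instance $(\eta_i)=(12,-12,-12,\dots)$ for the roots $\{3,4\}$, and $(\eta_i)=(4,4,-4,-4,\dots)$ for the double root $2$ (with sign-adjusted variants for the roots $\{-3,-4\}$ and $-2$) — whose digits lie in $\mathcal D-\mathcal D$. For necessity one must show $kv\notin T-T$ for every $k\in\{s+1,\dots,n-1+s\}$ in the remaining cases. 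The size estimate $k\le M/(\max(|p|,|q|)-1)$, together with the gap structure of $\mathcal D-\mathcal D$ for large $s$, reduces this to boundedly many configurations $(p,q,s)$ (plus the single family $f=f_1$, where those bounds are vacuous), and a uniform extremal analysis of the simultaneous equations then finishes: one pushes the leading digit to $\pm M$, propagates the forced boundary behaviour down the sequence, and checks that the companion equation is violated unless $\{|p|,|q|\}=\{2,2\}$ or $\{3,4\}$ and $k=2$ — precisely the two sporadic polynomials, which occur because there the natural balancing sequence has digit modulus exactly $n=|pq|$.

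The crux is this last non-existence statement. Once an eigenvalue equals $\pm2$ the size bounds become useless, and what is really needed is the rigidity of solving the $p$-adic and $q$-adic equations with a \emph{single common} digit string; arranging the extremal casework so that it terminates exactly at $f_1$ and $f_2$ — and so that it simultaneously delivers the ``$s\ge2$ forces disconnectedness'' clause even in those two cases — is the main obstacle of the proof.
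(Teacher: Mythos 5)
Your reduction in Steps 1--2 is sound and matches the paper's: connectedness is equivalent to $kv\in T-T$ for some $k\in\{s+1,\dots,|pq|-1+s\}$ (the paper's $r$), and your explicit digit strings for the sufficiency at $s=1$ check out (for roots $\{3,4\}$, $(12,-12,-12,\dots)$ does give $k=2$ at both roots, and $(4,4,-4,-4,\dots)$ satisfies both the value and the derivative equation at the double root $2$); the paper instead doubles the digit string for $v$ supplied by Theorem \ref{thm1.1}, which is cleaner but equivalent. The problem is the necessity direction, which is the substance of the theorem and which you do not actually prove. The only quantitative bound you state, $k\le M/(\max(|p|,|q|)-1)$ with $M=|pq|-1+s$, is a one-root estimate and is too weak to close even small cases: for $\{|p|,|q|\}=\{2,3\}$ it yields only $s\le 3$, and for $f_1$ (double root $2$) it is, as you note, vacuous. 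What is needed — and what the paper supplies — is a \emph{pair} of inequalities obtained by applying the neighbor-generating map once to $rv$ and bounding \emph{both} coordinates of the resulting lattice point $-(rpq+b_2)v+(r(p+q)-b_1)Av$ by $M\tilde\alpha$ and $M\tilde\beta$ respectively, with the exact values of $\tilde\alpha=\frac{|p|+|q|-1}{(|p|-1)(|q|-1)}$ and $\tilde\beta=\frac{1}{(|p|-1)(|q|-1)}$ from Lemma \ref{thm2.5} (and their $pq<0$ analogues). The first inequality kills all cases with $|p|,|q|\ge 3$ except $f_2$; the second handles $\min(|p|,|q|)=2$, including $f_1$, giving $s\le 1$ there and $s=0$ otherwise. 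In your language this corresponds to exploiting the simultaneous $p$-adic and $q$-adic equations together rather than one at a time, but you never extract the second inequality, and you explicitly concede that organizing the resulting casework "is the main obstacle of the proof." As written, the proposal therefore leaves the core of part (i), and the $s\ge 2$ exclusion in part (ii), unproved.
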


\medskip

As in the papers previously cited, a lot of calculations are needed in the proofs. But the main methods are algebraic and make full use of the properties of the matrix $A$. We also provide many figures to illustrate our results.

\medskip

The paper is organized as follows: In Section 2, we recall several well-known results on the connectedness of self-affine sets and prove a basic lemma;
Theorems \ref{thm1.1} and \ref{thm1.2} are proved in Section 3, and conclude with an open problem; Theorem \ref{thm1.3} is proved in Section 4.

 \bigskip

\section{\bf Preliminaries}

In the section, we provide several elementary results on self-affine sets $T(A, {\mathcal D})$. We call the digit set $\mathcal {D}$ {\it collinear} if ${\mathcal D}=\{d_1,\dots, d_m\}v$ for some non-zero vector $v\in \mathbb{R}^n$ and $d_1<d_2< \cdots <d_m, \ d_i\in \mathbb{R}$; If $d_{i+1}-d_i=1$, then $\mathcal{D}$ is called a {\it consecutive collinear digit set}.  Let $D = \{d_1, \dots, d_m\}$, $\Delta D=D-D=\{d=d_i-d_j:\ d_i, d_j \in D\}$. Then $\mathcal {D}=Dv$ and $\Delta \mathcal{D} = \Delta D v$. It is easy to see that the connectedness of $T(A, {\mathcal D})$ is invariant under a translation of the digit set, hence we always assume that $d_1=0$ for simplicity.  The following criterion for connectedness of $T(A, {\mathcal D})$ was due to \cite{Ha} or \cite{KL}.

\begin{lemma}\label{lem-con-criterion}
A self-affine set $T(A, {\mathcal D})$ with a consecutive collinear digit set $\mathcal {D}=\{0,1,\dots, m\}v$ is connected if and only if $v\in T-T$.
\end{lemma}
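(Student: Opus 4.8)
The plan is to use the standard graph-theoretic/Hata-type criterion for connectedness of attractors of iterated function systems. Recall that an attractor $T=\bigcup_{i=1}^m S_i(T)$ is connected if and only if the IFS cannot be split into two blocks whose images are separated; equivalently, for any two maps $S_i, S_j$ there is a chain $S_i=S_{i_0}, S_{i_1}, \dots, S_{i_k}=S_j$ with $S_{i_\ell}(T)\cap S_{i_{\ell+1}}(T)\neq\emptyset$ for each $\ell$. So the first step is to record this criterion (citing \cite{Ha} or \cite{KL}) and translate it for our setting. Here $S_i(x)=A^{-1}(x+(i-1)v)$ for $i=1,\dots,m+1$ (digits $0,1,\dots,m$), and $S_i(T)\cap S_{i+1}(T)\neq\emptyset$ means $A^{-1}(T+(i-1)v)\cap A^{-1}(T+iv)\neq\emptyset$, i.e. $(T+(i-1)v)\cap(T+iv)\neq\emptyset$, i.e. $v\in T-T$. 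Because the digit set is \emph{consecutive} and collinear, all the relevant differences of digits are integer multiples of $v$, and one checks that the whole collection $\{S_1(T),\dots,S_{m+1}(T)\}$ is chain-connected precisely when consecutive pieces meet, i.e. precisely when $v\in T-T$.

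The forward direction is the easier one: if $T$ is connected, then the connectedness criterion forces the pieces $S_i(T)$ to be chain-connected; since the digits are linearly ordered along $v$, a chain from $S_1(T)$ to $S_{m+1}(T)$ must in particular pass through consecutive overlaps, so $(T+(i-1)v)\cap(T+iv)\neq\emptyset$ for some $i$, giving $v\in T-T$ after subtracting $(i-1)v$. (More carefully: if \emph{no} two consecutive pieces met, one could separate $\{S_1(T),\dots,S_k(T)\}$ from $\{S_{k+1}(T),\dots,S_{m+1}(T)\}$ at the first gap, contradicting connectedness of $T$.) For the converse, suppose $v\in T-T$. Then $(T+(i-1)v)\cap(T+iv)\neq\emptyset$ for \emph{every} $i=1,\dots,m$, so consecutive pieces $S_i(T)$ and $S_{i+1}(T)$ always overlap; hence $\bigcup_{i=1}^{m+1}S_i(T)$ is a finite union of connected-by-induction sets forming a chain, and is therefore connected. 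One runs the standard argument: let $C_0=\{t\in T:\ t$ lies in a subpiece chain-connected to $S_1(T)\}$ and show by the IFS fixed-point/uniqueness argument that chain-connectedness of the pieces propagates to connectedness of $T$ itself — the key point being that $S_i$ is a homeomorphism onto its image, so each $S_i(T)$ is connected iff $T$ is, and a chain of overlapping connected sets is connected.

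The main (and essentially only) obstacle is the usual subtlety in the IFS connectedness criterion: chain-connectedness of the pieces $\{S_i(T)\}$ is what the criterion literally gives, and one must argue that this implies $T$ is connected, not merely that each $S_i(T)$ is. This is handled by the classical observation (Hata) that for the attractor of an IFS, connectedness is equivalent to the "overlap graph" on $\{1,\dots,m\}$ (with an edge $i\sim j$ when $S_i(T)\cap S_j(T)\neq\emptyset$) being connected; the proof iterates the functional equation, using that $A^{-1}$ contracts, so that $T$ is the Hausdorff limit of the $n$-fold iterates $\bigcup_{|J|=n} S_J(T)$, each of which is a connected union of small pieces once the overlap graph is connected. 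Since $\{v,Av\}$ being linearly independent guarantees $T$ has nonempty interior is \emph{not} needed here — we only need $T$ compact and the maps contractive — the argument is clean. The only case-specific input is the reduction "overlap graph connected $\iff$ consecutive overlaps $\iff v\in T-T$," which uses the consecutive collinear structure of $\mathcal D$ in an elementary way.
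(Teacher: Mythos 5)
The paper does not prove this lemma at all: it is quoted from Hata \cite{Ha} and Kirat--Lau \cite{KL}, so there is no internal proof to compare against. Your sufficiency direction is the standard one and is fine: if $v\in T-T$ then every pair of consecutive pieces meets, the overlap graph is a path, and the usual nested--intersection argument (iterate the IFS on a large invariant connected compact set containing $T$ and intersect) shows $T$ is connected.

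The necessity direction, however, has a genuine gap. Hata's criterion only gives you that the overlap graph on $\{0,\dots,m\}$, with an edge between $i$ and $j$ exactly when $|i-j|v\in T-T$, is connected. Your two justifications for deducing a \emph{consecutive} overlap from this --- ``a chain must pass through consecutive overlaps'' and ``if no two consecutive pieces met one could separate at the first gap'' --- are both false as combinatorial statements. A chain may jump over indices, and the proposed separation $\bigl\{S_1(T),\dots,S_k(T)\bigr\}$ versus $\bigl\{S_{k+1}(T),\dots,S_{m+1}(T)\bigr\}$ is only a separation if \emph{no} piece on the left meets \emph{any} piece on the right, which is not implied by the failure of consecutive overlaps: for instance with $m=3$, if $2v\in T-T$ and $3v\in T-T$ but $v\notin T-T$, the overlap graph on $\{0,1,2,3\}$ has edges $\{0,2\},\{1,3\},\{0,3\}$ and is connected, yet no two consecutive pieces meet and none of your proposed separations is actually a separation. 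So your argument proves only that some $(\ell_2-\ell_1)v\in T-T$ with $\ell_2>\ell_1$, not that $v\in T-T$. Ruling out configurations like the one above --- i.e.\ showing that for these specific attractors the minimal $k\ge 1$ with $kv\in T-T$ must equal $1$ whenever $T$ is connected --- is precisely the nontrivial content of the cited Kirat--Lau result, and it requires input about the structure of $T-T$ (equivalently of the neighbor set), not just the abstract overlap-graph criterion. As written, the necessity half of your proof does not go through.
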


Let ${\mathbb{Z}}[x]$ be the set of polynomials with integer coefficients. A polynomial $f(x)\in {\mathbb{Z}}[x]$ is said to be  {\it
expanding} if all its roots have moduli strictly bigger than $1$. Note that a matrix $A\in M_n({\mathbb{Z}})$ is
expanding if and only if its characteristic polynomial is expanding.
We say that a monic polynomial $f(x)\in {\mathbb{Z}}[x]$ with $|f(0)|=c$ has the {\it Height Reducing Property} (HRP)
if there exists $g(x)\in {\mathbb{Z}}[x]$ such that $$g(x)f(x)=x^k+a_{k-1}x^{k-1}+\cdots+a_1x\pm c$$ where $|a_i|\leq c-1, \  i=1,\dots,c-1$.

\medskip

This property was introduced by Kirat and Lau \cite{KL} to study the connectedness of self-affine tiles  with consecutive collinear digit sets. It was proved that:

\medskip

\begin{pro} \label{thm2.2}
Let $A\in M_n({\mathbb{Z}})$ with $|\det (A)|=c$ be expanding and ${\mathcal{D}}=\{0,1,2,\dots,(c-1)\}v$.  If the characteristic
polynomial of $A$ has the Height Reducing Property, then $T(A, {\mathcal D})$ is connected.
\end{pro}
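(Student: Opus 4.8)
The plan is to reduce everything to Lemma~\ref{lem-con-criterion} and then to exhibit an explicit expansion of $v$ in ``base $A$'' all of whose digits lie in $\Delta{\mathcal D}$; this is essentially the argument of Kirat and Lau \cite{KL}. Here ${\mathcal D}=\{0,1,\dots,c-1\}v$, so $\Delta{\mathcal D}=\{jv:\ |j|\le c-1\}$, and since (from $T=A^{-1}(T+{\mathcal D})$) the set $T-T$ satisfies $T-T=A^{-1}\big((T-T)+\Delta{\mathcal D}\big)$, it is the attractor of the IFS $\{x\mapsto A^{-1}(x+e):\ e\in\Delta{\mathcal D}\}$, whence
\[
T-T=\Big\{\sum_{i\ge1}A^{-i}e_i:\ e_i\in\Delta{\mathcal D}\ \text{for every }i\Big\}.
\]
So by Lemma~\ref{lem-con-criterion} it suffices to produce digits $e_i\in\Delta{\mathcal D}$ with $v=\sum_{i\ge1}A^{-i}e_i$; the series converges automatically because $A$ is expanding, so $\|A^{-i}\|\to0$ geometrically in a suitable norm while the $e_i$ stay bounded.

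First I would feed the HRP into the Cayley--Hamilton theorem. Since $f$ is the characteristic polynomial of $A$ we have $f(A)=0$, hence $g(A)f(A)=0$, that is,
\[
A^k+a_{k-1}A^{k-1}+\cdots+a_1A+\varepsilon cI=0,\qquad\varepsilon\in\{1,-1\},
\]
with $|a_j|\le c-1$. Multiplying this identity by $A^{-k}$ and applying it to $v$ yields
\[
v=\sum_{j=1}^{k-1}(-a_{k-j})A^{-j}v\ -\ \varepsilon c\,A^{-k}v .
\]
Each coefficient $-a_{k-j}$ has modulus at most $c-1$, so the first $k-1$ terms already contribute legitimate digits $(-a_{k-j})v\in\Delta{\mathcal D}$ at levels $1,\dots,k-1$. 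The only term out of range is the last one: $-\varepsilon c\,v$ has modulus $c$, one unit too big.

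To remove this obstruction I would split $c=(c-1)+1$, so that $-\varepsilon c\,A^{-k}v=\big(-\varepsilon(c-1)\big)A^{-k}v+A^{-k}(-\varepsilon v)$; the first summand contributes the admissible digit $-\varepsilon(c-1)v\in\Delta{\mathcal D}$ at level $k$, and there remains a ``carry'' $A^{-k}(-\varepsilon v)$. Now apply the same identity again with $-\varepsilon v$ in place of $v$ (it holds verbatim with all signs reversed) and iterate; after $N$ steps the remainder is $(-\varepsilon)^N A^{-Nk}v\to0$. In the limit this gives
\[
v=\sum_{n\ge0}(-\varepsilon)^n A^{-nk}\Big(\sum_{j=1}^{k-1}(-a_{k-j})A^{-j}v+\big(-\varepsilon(c-1)\big)A^{-k}v\Big),
\]
and writing the index of each summand as $i=nk+j$ with $1\le j\le k$ exhibits $v=\sum_{i\ge1}A^{-i}e_i$ with every $e_i\in\{\pm a_\ell v,\ \pm(c-1)v\}\subseteq\Delta{\mathcal D}$. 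Hence $v\in T-T$, and by Lemma~\ref{lem-con-criterion} the set $T(A,{\mathcal D})$ is connected.

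The only genuinely delicate point is the treatment of the oversized constant term $\pm c$: the decomposition $c=(c-1)+1$ together with the bookkeeping of the alternating signs $(-\varepsilon)^n$ through the recursion must be organized so that the carry never escapes the digit range $\{-(c-1),\dots,c-1\}$. The remaining ingredients---Cayley--Hamilton, the rearrangement, and the convergence of the resulting series---are routine.
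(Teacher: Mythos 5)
Your argument is correct: the reduction to Lemma~\ref{lem-con-criterion} via the description of $T-T$ as radix expansions with digits in $\Delta{\mathcal D}$, followed by the Cayley--Hamilton identity from the HRP and the geometric-series iteration that absorbs the oversized constant term $\pm c$ as a carry of modulus $1$, is exactly the Kirat--Lau argument that the paper invokes by citation (the paper states Proposition~\ref{thm2.2} without proof, referring to \cite{KL}). The bookkeeping checks out: the remainder after each block is $(-\varepsilon)^{n}A^{-nk}v$, so every digit is $(-\varepsilon)^{n}(-a_{k-j})$ or $(-\varepsilon)^{n+1}(c-1)$, all of modulus at most $c-1$, hence in $\Delta{\mathcal D}$.
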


\medskip

In \cite{KLR}, Kirat {\it et al.} conjectured that all expanding integer monic polynomials have HRP.  Akiyama and Gjini \cite{AG} confirmed it up to $n=4$. But it is still unclear for the higher dimensions. Recently, He {\it et al.} \cite{HKL} developed an algorithm of polynomials about HRP. It may be a good attempt on this problem.

\medskip

Denote the characteristic polynomial of $A$ by $f(x)=x^2+bx+c$, where $b,c \in {\mathbb Z}$. We can regard $A$ as the companion matrix of $f(x)$, i.e.,
$$A=\left[
        \begin{array}{rrrr}
          0  & -c \\
           1 &  -b   \\
    \end{array}
    \right].$$
Let $\Delta=b^2-4c$ be the discriminant. Define $\alpha_i,\beta_i$ by $$A^{-i}v=\alpha_iv+\beta_iAv, \quad i=1,2,\dots.$$
According to the Hamilton-Cayley theorem $f(A)=A^2+bA+cI=0$ where $I$ is a $2\times 2$ identity matrix, the following consequence is well-known (please refer to \cite{LLu1}, \cite{LLu2}).

\medskip

\begin{lemma} \label{thm2.3}
Let $\alpha_i,\beta_i$ be defined as the above. Then $c\alpha_{i+2}+b\alpha_{i+1}+\alpha_i=0$ and $c\beta_{i+2}+b\beta_{i+1}+\beta_i=0$, i.e.,
$$\left[
        \begin{array}{rr}
          \alpha_{i+1} \\
          \alpha_{i+2} \\
    \end{array}
    \right]=\left[
        \begin{array}{rrrr}
          0  & 1 \\
           -1/c &  -b/c   \\
    \end{array}
    \right]^i \left[
        \begin{array}{rr}
          \alpha_1 \\
          \alpha_2 \\
    \end{array}
    \right]; \quad \left[
        \begin{array}{rr}
          \beta_{i+1} \\
          \beta_{i+2} \\
    \end{array}
    \right]=\left[
        \begin{array}{rrrr}
          0  & 1 \\
           -1/c &  -b/c   \\
    \end{array}
    \right]^i \left[
        \begin{array}{rr}
          \beta_1 \\
          \beta_2 \\
    \end{array}
    \right]
$$
and $\alpha_1=-b/c, \alpha_2=(b^2-c)/c^2; \beta_1=-1/b, \beta_2=b/c^2$. Moreover for $\Delta\ne 0$, we have $$\alpha_i=\frac{c(r_1^{i+1}-r_2^{i+1})}{\Delta^{1/2}} \quad\text{and}\quad \beta_i=\frac{-(r_1^i-r_2^i)}{\Delta^{1/2}},$$
where $r_1=\frac{-b+\Delta^{1/2}}{2c}$ and $r_2=\frac{-b-\Delta^{1/2}}{2c}$ are the two roots of $cx^2+bx+1=0$.
\end{lemma}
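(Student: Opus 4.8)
The plan is to deduce everything from the Cayley--Hamilton identity $A^2+bA+cI=0$ together with the standing hypothesis that $\{v,Av\}$ is a basis of $\mathbb{R}^2$. Multiplying $A^2+bA+cI=0$ by $A^{-(i+2)}$ gives the operator identity
\[
A^{-i}+bA^{-(i+1)}+cA^{-(i+2)}=0,\qquad i\ge 0,
\]
and applying it to the vector $v$ while substituting $A^{-i}v=\alpha_iv+\beta_iAv$ yields
\[
\bigl(\alpha_i+b\alpha_{i+1}+c\alpha_{i+2}\bigr)v+\bigl(\beta_i+b\beta_{i+1}+c\beta_{i+2}\bigr)Av=0 .
\]
Since $v$ and $Av$ are linearly independent, both bracketed coefficients must vanish, which is exactly $c\alpha_{i+2}+b\alpha_{i+1}+\alpha_i=0$ and $c\beta_{i+2}+b\beta_{i+1}+\beta_i=0$. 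Solving each relation for the highest index, $u_{i+2}=-\tfrac{b}{c}u_{i+1}-\tfrac{1}{c}u_i$, and rewriting it as the action of the one-step transfer matrix $\left[\begin{array}{rr}0&1\\-1/c&-b/c\end{array}\right]$ on the column $(u_i,u_{i+1})$ produces the stated matrix formulas after iterating $i$ times.

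For the initial data I would use $A^{-1}=-c^{-1}(A+bI)$ (immediate from Cayley--Hamilton): then $A^{-1}v=-\tfrac{b}{c}v-\tfrac{1}{c}Av$, giving $\alpha_1=-b/c$ and $\beta_1=-1/c$, and applying $A^{-1}$ once more (using $A^{-1}(Av)=v$ and the expression just found for $A^{-1}v$) gives $A^{-2}v=\tfrac{b^2-c}{c^2}v+\tfrac{b}{c^2}Av$, i.e. $\alpha_2=(b^2-c)/c^2$ and $\beta_2=b/c^2$. For the closed forms when $\Delta\neq 0$: the recursion $cu_{i+2}+bu_{i+1}+u_i=0$ has auxiliary equation $cx^2+bx+1=0$, whose roots $r_1,r_2$ are \emph{distinct}, so its general solution is $u_i=C_1r_1^i+C_2r_2^i$ (note $cx^2+bx+1$ is the reciprocal polynomial of $f$, so $r_1,r_2$ are just the reciprocals of the eigenvalues of $A$, consistent with working with $A^{-1}$). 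It then remains only to check that $\alpha_i=c(r_1^{i+1}-r_2^{i+1})/\Delta^{1/2}$ and $\beta_i=-(r_1^i-r_2^i)/\Delta^{1/2}$ are of this form (clear) and agree with the computed values at $i=1,2$, using $r_1-r_2=\Delta^{1/2}/c$, $r_1+r_2=-b/c$, $r_1r_2=1/c$; uniqueness of the solution of a second-order recursion with prescribed first two terms then concludes. One may shorten the bookkeeping by first observing that $\alpha_i=-c\,\beta_{i+1}$, so a single verification suffices.

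I do not anticipate a genuine obstacle: the argument is elementary linear algebra. The only two places requiring care are (a) invoking the linear independence of $\{v,Av\}$ to split the single vector identity into the two scalar recursions — this is precisely where the hypothesis is used — and (b) the sign bookkeeping when matching the closed forms against the initial values $\alpha_1,\alpha_2,\beta_1,\beta_2$ in the case $\Delta\neq 0$.
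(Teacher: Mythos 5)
Your proof is correct and complete; the paper itself gives no proof of this lemma (it is merely cited as ``well-known'' from \cite{LLu1}, \cite{LLu2}), and your argument via Cayley--Hamilton, the linear independence of $\{v,Av\}$, and the uniqueness of solutions to a second-order recursion with prescribed initial terms is exactly the standard route. Note that your computation $\beta_1=-1/c$ is right and exposes a typo in the statement (which reads $\beta_1=-1/b$): the value $-1/c$ is the one consistent with the closed form $\beta_1=-(r_1-r_2)/\Delta^{1/2}=-1/c$ and with the later use in Lemma \ref{thm2.5}.
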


\medskip

Set
\begin{equation*}
\tilde{\alpha}:= \sum_{i=1}^{\infty}|\alpha_i|, \qquad \tilde{\beta}:= \sum_{i=1}^{\infty}|\beta_i|.
\end{equation*}
Then $\tilde{\alpha}$ and $\tilde{\beta}$ are finite numbers as $r_1, r_2$ have moduli strictly less than $1$.

\medskip

Write $L:=\{\gamma v+\delta Av: \gamma,\delta\in {\mathbb{Z}}\}$, then $L$ is a \emph{lattice} generated by $\{v, Av\}$. For $l\in L\setminus\{0\}$, we call $T+l$  a \emph{neighbor} of $T$ if $T\cap(T+l)\ne \emptyset$.  It is clear that $T+l$ is a neighbor of $T$ if and only if $l\in T-T$, hence $l$ can be expressed as $$l=\sum_{i=1}^{\infty}b_iA^{-i}v \in T-T, ~\text{where}~ b_i\in \Delta D.$$

If $T+l$ is a neighbor of $T$ where $l=\sum_{i=1}^{\infty}b_iA^{-i}v :=\gamma v+\delta Av$, then
\begin{align}\label{estimate}
|\gamma|\leq \max_i|b_i|\tilde \alpha \quad\text{and}\quad |\delta|\leq \max_i|b_i|\tilde \beta.
\end{align}
By multiplying $A$ on both sides of the expression of $l$ and by using $f(A)=0$, it follows that $T-(c\delta+b_1)v+(\gamma - b\delta)Av$ is also a neighbor of $T$. Repeatedly applying this neighbor-generating algorithm, we then can construct a sequence of neighbors: $\{T+l_n\}_{n=0}^{\infty}$, where $l_0=l, l_n=\gamma_n v+\delta_n Av, n\geq 1$ and
\begin{equation}\label{(2.1)}
\left[
        \begin{array}{rr}
          \gamma_n \\
          \delta_n \\
    \end{array}
    \right]=A^n \left[
        \begin{array}{rr}
          \gamma \\
          \delta \\
    \end{array}
    \right]-\sum_{i=1}^n A^{i-1}\left[
        \begin{array}{rr}
          b_{n+1-i} \\
          0\\
    \end{array}
    \right].
\end{equation}
Moreover, $|\gamma_n|\leq \max_i|b_i|\tilde \alpha$ and $|\delta_n|\leq \max_i|b_i|\tilde \beta$ hold for any $n\geq 0$.

\begin{lemma}\label{lem-sym}
If the characteristic polynomial of the expanding matrix $A$ is $x^2+ bx+c$ and that of $B$ is $x^2-bx+c$. Then the self-affine set $T(A, {\mathcal D})$ is connected if and only if $T(B, {\mathcal D})$ is connected where ${\mathcal D}$ is a consecutive collinear digit set.
\end{lemma}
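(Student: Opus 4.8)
The plan is to exploit the elementary fact that negating $A$ turns the characteristic polynomial $x^{2}+bx+c$ into $x^{2}-bx+c$, together with the symmetry of the difference digit set $\Delta D=\{-m,\dots,m\}$ about $0$. The point is that replacing $A^{-i}$ by $(-A)^{-i}=(-1)^{i}A^{-i}$ only changes the signs of the ``odd'' coefficients in the series expansions of points of $T$, and such a sign change can be undone by relabelling the digits.

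First I would reduce the statement to the case $B=-A$. Let $B$ be any expanding matrix with characteristic polynomial $x^{2}-bx+c$ and $\{v,Bv\}$ linearly independent. Then the matrix $P=[\,v\ \ Bv\,]$ is invertible, and by the Cayley--Hamilton theorem $P^{-1}BP$ is the companion matrix $C$ of $x^{2}-bx+c$, while $P^{-1}v=e_{1}$; since $QT(M,\mathcal D)=T(QMQ^{-1},Q\mathcal D)$ for any invertible $Q$, this gives $P^{-1}T(B,\mathcal D)=T(C,\{0,1,\dots,m\}e_{1})$, a linear, hence homeomorphic, image of $T(B,\mathcal D)$. The same applies to $-A$, which is expanding with the same characteristic polynomial and satisfies $\{v,-Av\}$ linearly independent. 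Hence both $T(B,\mathcal D)$ and $T(-A,\mathcal D)$ are connected if and only if $T(C,\{0,\dots,m\}e_{1})$ is connected; in particular $T(B,\mathcal D)$ is connected iff $T(-A,\mathcal D)$ is. So it suffices to prove the lemma for $B=-A$.

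Next I would apply Lemma~\ref{lem-con-criterion}. As already recorded in Section~2, $T(A,\mathcal D)-T(A,\mathcal D)=\{\sum_{i\ge1}b_{i}A^{-i}v:\ b_{i}\in\Delta D\}$, so $T(A,\mathcal D)$ is connected iff there exist $b_{i}\in\Delta D$ with $v=\sum_{i\ge1}b_{i}A^{-i}v$; likewise, since $(-A)^{-i}=(-1)^{i}A^{-i}$, the set $T(-A,\mathcal D)$ is connected iff there exist $b_{i}\in\Delta D$ with $v=\sum_{i\ge1}(-1)^{i}b_{i}A^{-i}v$. Because $\Delta D=\{-m,\dots,m\}=-\Delta D$, the map $(b_{i})_{i\ge1}\mapsto((-1)^{i}b_{i})_{i\ge1}$ is a bijection of $(\Delta D)^{\mathbb N}$ onto itself (indeed an involution), and it carries one of these two solvability conditions onto the other. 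Therefore $T(A,\mathcal D)$ is connected iff $T(-A,\mathcal D)$ is connected, and combined with the reduction above this proves the lemma.

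I do not expect a genuine obstacle: the argument is a symmetry-plus-relabelling, and the only step needing a little care is the companion-form reduction, which relies on the hypothesis that $\{v,Av\}$ (equivalently $\{v,Bv\}$) is a basis so that connectedness depends only on $b,c,m$. If one prefers to avoid the change of coordinates altogether, the equivalent route is through Lemma~\ref{thm2.3}: for a matrix $M$ with characteristic polynomial $x^{2}+bx+c$ and $\{v,Mv\}$ a basis, $T(M,\{0,\dots,m\}v)$ is connected iff there exist $b_{i}\in\Delta D$ with $\sum_{i}b_{i}\alpha_{i}=1$ and $\sum_{i}b_{i}\beta_{i}=0$, where $\alpha_{i},\beta_{i}$ depend only on $b,c$; replacing $b$ by $-b$ multiplies $\alpha_{i}$ and $\beta_{i}$ by $(-1)^{i}$ up to sign (immediate from the recursions and the initial values in Lemma~\ref{thm2.3}), so the substitution $b_{i}\mapsto(-1)^{i}b_{i}$ again does the job.
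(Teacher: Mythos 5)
Your proof is correct and follows essentially the same route as the paper: reduce to $B=-A$ and use $(-A)^{-i}=(-1)^{i}A^{-i}$ together with the symmetry $\Delta D=-\Delta D$ to relabel the digit sequence, then invoke the criterion $v\in T-T$. The only difference is that you explicitly justify the reduction to $B=-A$ via the companion-form conjugation, a step the paper's proof silently assumes by simply setting $B=-A$.
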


\begin{proof}
Let $B=-A$ and $T_1=T(A, {\mathcal D}), \  T_2=T(-A, {\mathcal D})$. If $l\in T_1-T_1$, then  $$l=\sum_{i=1}^{\infty}b_iA^{-i}v=\sum_{i=1}^\infty b_{2i} (-A)^{-2i}v+ \sum_{i=1}^\infty(-b_{2i-1})(-A)^{-2i+1}v.$$ Thus $l\in T_2-T_2$, and vice versa.
\end{proof}

\medskip

To get the necessary conditions of Theorems \ref{thm1.1} , \ref{thm1.2} and \ref{thm1.3} , we need the exact values of $\tilde{\alpha}$ and $\tilde{\beta}$.

\medskip

\begin{lemma}\label{thm2.5}
Let the characteristic polynomial of the expanding matrix $A$ be $f(x)=x^2+bx+c$, where $b, c$ are integers and $\Delta=b^2-4c\geq0$. Then
\begin{equation*}
\tilde{\alpha}= \left\{
\begin{array}{ll}
\frac{|b|-1}{c-|b|+1} &\quad  c>0 \\ \\
\frac{|b|+1}{|c|-|b|-1} & \quad  c<0;
\end{array}
\right. \ \ \ \ \ \ \ \ \ \  \ \ \ \ \
\tilde{\beta}= \left\{
\begin{array}{ll}
\frac{1}{c-|b|+1} &\quad c>0 \\ \\
\frac{1}{|c|-|b|-1} &  \quad  c<0.
\end{array}
\right.
\end{equation*}
\end{lemma}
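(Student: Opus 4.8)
The plan is to compute $\tilde\alpha = \sum_{i\ge 1}|\alpha_i|$ and $\tilde\beta = \sum_{i\ge 1}|\beta_i|$ directly from the closed-form expressions in Lemma \ref{thm2.3}, namely
$\alpha_i = c(r_1^{i+1}-r_2^{i+1})/\Delta^{1/2}$ and $\beta_i = -(r_1^i-r_2^i)/\Delta^{1/2}$,
where $r_1,r_2$ are the two real roots of $cx^2+bx+1=0$ (real since $\Delta\ge 0$). The key structural observation is that since $r_1 r_2 = 1/c$ and $r_1+r_2 = -b/c$, the roots $r_1,r_2$ have the same sign, determined by the sign of $c$: both positive when $c>0$ (as $-b/c$ and $1/c$ must be consistent — actually one should note $r_1,r_2$ are both negative of sign of $b$ over... ) — more carefully, $r_1 r_2 = 1/c$ so they share a sign iff $c>0$, and they have opposite signs iff $c<0$. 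Using Lemma \ref{lem-sym} (or the fact that replacing $b$ by $-b$ replaces $r_j$ by $-r_j$, hence leaves $|\alpha_i|,|\beta_i|$ unchanged), I may assume $b\ge 0$, so that $r_1 = (-b+\Delta^{1/2})/(2c)$ and $r_2=(-b-\Delta^{1/2})/(2c)$.

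First I would handle the case $c>0$. Then (with $b\ge 0$) both $r_1,r_2$ are negative, with $|r_2|\ge |r_1|$, i.e. $r_2 \le r_1 < 0$. Then $r_1^i - r_2^i$ has a fixed sign for each fixed parity, but more usefully $|r_1^i - r_2^i| = |r_2|^i - |r_1|^i$ when... no — here the cleanest route is: since $r_1,r_2<0$, write $r_1 = -\rho_1$, $r_2=-\rho_2$ with $0<\rho_1\le \rho_2<1$; then $r_1^i - r_2^i = (-1)^i(\rho_1^i - \rho_2^i)$, so $|\beta_i| = |\rho_1^i-\rho_2^i|/\Delta^{1/2} = (\rho_2^i-\rho_1^i)/\Delta^{1/2}$, and this telescopes into a difference of two geometric series:
$\tilde\beta = \frac{1}{\Delta^{1/2}}\Bigl(\frac{\rho_2}{1-\rho_2} - \frac{\rho_1}{1-\rho_1}\Bigr) = \frac{1}{\Delta^{1/2}}\cdot\frac{\rho_2-\rho_1}{(1-\rho_1)(1-\rho_2)}$.
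Now $\rho_2-\rho_1 = \Delta^{1/2}/c$ (from the quadratic formula, since $c>0$), and $(1-\rho_1)(1-\rho_2) = 1 - (\rho_1+\rho_2) + \rho_1\rho_2 = 1 - b/c + 1/c = (c-b+1)/c$; hence $\tilde\beta = (1/c)/((c-b+1)/c) = 1/(c-b+1) = 1/(c-|b|+1)$, as claimed. The computation of $\tilde\alpha$ is identical up to a shift of index and a factor $c$: $|\alpha_i| = c(\rho_2^{i+1}-\rho_1^{i+1})/\Delta^{1/2}$, so $\tilde\alpha = c\bigl(\frac{\rho_2^2}{1-\rho_2} - \frac{\rho_1^2}{1-\rho_1}\bigr)/\Delta^{1/2}$; expanding $\frac{\rho_2^2}{1-\rho_2}-\frac{\rho_1^2}{1-\rho_1} = \frac{\rho_2^2(1-\rho_1)-\rho_1^2(1-\rho_2)}{(1-\rho_1)(1-\rho_2)} = \frac{(\rho_2-\rho_1)(\rho_1+\rho_2-\rho_1\rho_2)}{(1-\rho_1)(1-\rho_2)}$ and substituting $\rho_1+\rho_2 = b/c$, $\rho_1\rho_2 = 1/c$, $\rho_2-\rho_1=\Delta^{1/2}/c$ gives $\tilde\alpha = c\cdot\frac{(\Delta^{1/2}/c)\bigl((b-1)/c\bigr)}{(c-b+1)/c\cdot \Delta^{1/2}} = \frac{b-1}{c-b+1} = \frac{|b|-1}{c-|b|+1}$.

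Next, the case $c<0$. Now $r_1 r_2 = 1/c < 0$, so the roots have opposite signs; with $b\ge 0$ one checks $r_1>0>r_2$. Write $r_1 = \rho_1>0$, $r_2 = -\rho_2$ with $\rho_2>0$, and $0<\rho_1,\rho_2<1$. Then $r_1^i - r_2^i = \rho_1^i - (-1)^i\rho_2^i$, whose absolute value is $\rho_1^i + \rho_2^i$ when $i$ is odd and $|\rho_1^i - \rho_2^i|$ when $i$ is even — so the sum does not telescope as cleanly. The cleanest fix is to split by parity: summing $|\beta_i|$ over odd $i$ gives $\frac{1}{\Delta^{1/2}}\sum_{k\ge 0}(\rho_1^{2k+1}+\rho_2^{2k+1}) = \frac{1}{\Delta^{1/2}}\bigl(\frac{\rho_1}{1-\rho_1^2}+\frac{\rho_2}{1-\rho_2^2}\bigr)$, and over even $i$ gives $\frac{1}{\Delta^{1/2}}\sum_{k\ge 1}|\rho_1^{2k}-\rho_2^{2k}| = \frac{1}{\Delta^{1/2}}\bigl|\frac{\rho_1^2}{1-\rho_1^2}-\frac{\rho_2^2}{1-\rho_2^2}\bigr|$; adding, the dominant root's contributions combine to $\frac{1}{\Delta^{1/2}}\cdot\frac{\rho_1+\rho_2}{(1-\rho_1)(1-\rho_2)}$ or a similar symmetric rational expression (the exact grouping depends on which of $\rho_1,\rho_2$ is larger, but the final symmetric functions are the same). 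Substituting the symmetric functions $\rho_1 \rho_2 = |r_1 r_2| = 1/|c|$, $\rho_2 - \rho_1 = ?$ — here one must recompute: $r_1 + r_2 = -b/c = b/|c|$ and $r_1 - r_2 = \Delta^{1/2}/c = -\Delta^{1/2}/|c|$, so $\rho_1 - (-\rho_2) = \rho_1+\rho_2 = -\Delta^{1/2}/|c|$?? — I will need to track signs carefully, but the upshot is $\rho_1+\rho_2 = \Delta^{1/2}/|c|$ and $\rho_2-\rho_1 = b/|c|$, giving $(1-\rho_1)(1-\rho_2) = 1 - (\rho_1+\rho_2)\cdot(\text{sign})\cdots$. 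Carrying this through yields $(1-\rho_1^2)(1-\rho_2^2)$ or $(1+\rho_1)(1+\rho_2)(1-\rho_1)(1-\rho_2)$-type denominators whose expansion in terms of $\rho_1\rho_2 = 1/|c|$ and $\rho_1+\rho_2$ reduces to $(|c|-|b|-1)/|c|$ (for the combination giving $\tilde\beta$) and to a numerator producing $|b|+1$ for $\tilde\alpha$, yielding $\tilde\beta = 1/(|c|-|b|-1)$ and $\tilde\alpha = (|b|+1)/(|c|-|b|-1)$.

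The main obstacle is the case $c<0$: there the two roots have opposite signs, so $|\beta_i|$ and $|\alpha_i|$ are not given by a single geometric-type term and the naive telescoping breaks; one must either split the sum by the parity of $i$ (as above) and reassemble, being vigilant about absolute values and the ordering of $\rho_1,\rho_2$, or else group consecutive pairs of terms $|\beta_{2k-1}|+|\beta_{2k}|$ into a single rational expression before summing. Either way the bookkeeping of signs — which root is larger in modulus, and the sign of $b$ — is the delicate part; invoking Lemma \ref{lem-sym} to reduce to $b\ge 0$ trims half of it, and the expanding hypothesis ($|r_1|,|r_2|<1$, equivalently $|c|-|b|-1>0$ when $c<0$ and $c-|b|+1>0$ when $c>0$, which also guarantees the denominators are positive) ensures all the geometric series converge and the final fractions are well-defined and positive.
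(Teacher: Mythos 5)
Your overall strategy coincides with the paper's: take the closed forms for $\alpha_i,\beta_i$ from Lemma \ref{thm2.3}, telescope two geometric series when $c>0$ (same-sign roots), and split the sum by the parity of $i$ when $c<0$ (opposite-sign roots). Your $c>0$, distinct-root computation is correct and complete. The $c<0$ half, however, is only a plan, not a proof: you leave the identification of $\rho_1+\rho_2$ versus $|\rho_1-\rho_2|$ unresolved and assert that the reduction "carries through." It does — with $\rho_1\ge\rho_2$ one gets $\tilde{\beta}\,\Delta^{1/2}=\frac{\rho_1}{1-\rho_1}+\frac{\rho_2}{1+\rho_2}=\frac{\rho_1+\rho_2}{(1-\rho_1)(1+\rho_2)}$ and $\tilde{\alpha}\,\Delta^{1/2}/|c|=\frac{\rho_1^2}{1-\rho_1}-\frac{\rho_2^2}{1+\rho_2}=\frac{(\rho_1+\rho_2)(\rho_1-\rho_2+\rho_1\rho_2)}{(1-\rho_1)(1+\rho_2)}$, and the substitutions $\rho_1\rho_2=1/|c|$, $\rho_1+\rho_2=\Delta^{1/2}/|c|$, $\rho_1-\rho_2=|b|/|c|$, $(1-\rho_1)(1+\rho_2)=(|c|-|b|-1)/|c|$ give the stated values — but this bookkeeping is exactly the content of the lemma for $c<0$ and must actually be written out.

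The genuine gap is the repeated-root case $\Delta=0$, i.e. $c=b^2/4>0$ (for instance $f(x)=x^2\pm4x+4$, which is precisely the case $c=4$ needed in Theorem \ref{thm1.1}(i)). There the formulas $\alpha_i=c(r_1^{i+1}-r_2^{i+1})/\Delta^{1/2}$ and $\beta_i=-(r_1^i-r_2^i)/\Delta^{1/2}$ are stated in Lemma \ref{thm2.3} only for $\Delta\ne0$, and your expression $\frac{1}{\Delta^{1/2}}\cdot\frac{\rho_2-\rho_1}{(1-\rho_1)(1-\rho_2)}$ is literally $0/0$. A continuity argument is not available since $b,c$ range over integers; this case must be treated separately, as the paper does, using $|\alpha_i|=(i+1)/|x_1|^{i}$ and $|\beta_i|=i/|x_1|^{i+1}$ for the double root $x_1$ of $x^2+bx+c$, whose sums again yield $\frac{|b|-1}{c-|b|+1}$ and $\frac{1}{c-|b|+1}$.
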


\medskip

\begin{proof}
 Let $x_1,x_2$ denote the roots of $x^2+bx+c=0$.

\noindent (1) \ $c>0$. \quad   If $|x_1|>|x_2|$, then  $\displaystyle |\alpha_i|=\frac{c}{|x_1-x_2|}(\frac{1}{|x_2|^{i+1}}-\frac{1}{|x_1|^{i+1}})$,
 $\displaystyle |\beta_i|=\frac{1}{|x_1-x_2|}(\frac{1}{|x_2|^{i}}-\frac{1}{|x_1|^{i}})$. Hence
\begin{eqnarray*}
\tilde{\alpha}=\sum_{i=1}^\infty|\alpha_i|=\sum_{i=1}^\infty\frac{c}{|x_1-x_2|}(\frac{1}{|x_2|^{i+1}}-\frac{1}{|x_1|^{i+1}})=
\frac{|x_1+x_2|-1}{(|x_1|-1)(|x_2|-1)}=\frac{|b|-1}{c-|b|+1};
\end{eqnarray*}
\begin{eqnarray*}
\tilde{\beta}=\sum_{i=1}^\infty|\beta_i|=\sum_{i=1}^\infty\frac{1}{|x_1-x_2|}(\frac{1}{|x_2|^{i}}-\frac{1}{|x_1|^{i}})=
\frac{1}{(|x_1|-1)(|x_2|-1)}=\frac{1}{c-|b|+1}.
\end{eqnarray*}
Similarly for $\displaystyle |x_2|>|x_1|$.

If $\displaystyle |x_1|=|x_2|=|b|/2$,  by Lemma \ref{thm2.3} and  a simple calculation,
it follows that  $\displaystyle |\alpha_i|=\frac{i+1}{|x_1|^i}$ and $\displaystyle |\beta_i|=\frac{i}{|x_1|^{i+1}}$. Thus
\begin{eqnarray*}
(1-\frac{1}{|x_1|})\tilde{\alpha}&=&\sum_{i=1}^\infty\frac{i+1}{|x_1|^i}-\sum_{j=1}^\infty\frac{j+1}{|x_1|^{j+1}}
=\frac{2}{|x_1|}+\sum_{\ell=1}^\infty\frac{1}{|x_1|^{\ell+1}}\\ \nonumber
&=& \frac{2}{|x_1|}+\frac{1}{|x_1|(|x_1|-1)}=\frac{2|x_1|-1}{|x_1|(|x_1|-1)};
\end{eqnarray*}
\begin{eqnarray*}
(|x_1|-1)\tilde{\beta}&=&\sum_{i=1}^\infty\frac{i}{|x_1|^i}-\sum_{j=1}^\infty\frac{j}{|x_1|^{j+1}}=\frac{1}{|x_1|}+\sum_{\ell=1}^\infty\frac{1}{|x_1|^{\ell+1}}\\  \nonumber
&=& \frac{1}{|x_1|}+\frac{1}{|x_1|(|x_1|-1)}=\frac{1}{(|x_1|-1)}
\end{eqnarray*}
which implies $\displaystyle\tilde{\alpha}=\frac{2|x_1|-1}{(|x_1|-1)^2}=\frac{|b|-1}{c-|b|+1}$ and $\displaystyle\tilde{\beta}=\frac{1}{(|x_1|-1)^2}=\frac{1}{c-|b|+1}$.

\medskip

\noindent (2) \ $c<0$. \quad  Without loss of generality, we can assume  $|x_1|\geq |x_2|$, then
\begin{eqnarray*}
\tilde{\alpha}&=&\sum_{i=1}^\infty|\alpha_i|
=\frac{|c|}{|x_1|+|x_2|}\left(\sum_{i=1}^\infty(\frac{1}{|x_1|^{2i+1}}+\frac{1}{|x_2|^{2i+1}})+\sum_{i=1}^\infty(\frac{1}{|x_2|^{2i}}-\frac{1}{|x_1|^{2i}})\right)\\
&=& \frac{|c|}{|x_1|+|x_2|}\left(\frac{1}{|x_1|(|x_1|^2-1)}+\frac{1}{|x_2|(|x_2|^2-1)}+\frac{1}{|x_2|^2-1}-\frac{1}{|x_1|^2-1}\right)\\
&=&\frac{|c|}{|x_1|+|x_2|}\left(\frac{1}{|x_2|(|x_2|-1)}-\frac{1}{|x_1|(|x_1|+1)}\right) \\
&=&\frac{|x_1|-|x_2|+1}{(|x_1|+1)(|x_2|-1)}\\
&=&\frac{|b|+1}{|c|-|b|-1};
\end{eqnarray*}
\begin{eqnarray*}
\tilde{\beta}&=&\sum_{i=1}^\infty|\beta_i|=
\frac{1}{|x_1|+|x_2|}\left(\sum_{i=1}^\infty(\frac{1}{|x_1|^{2i-1}}+\frac{1}{|x_2|^{2i-1}})+\sum_{i=1}^\infty(\frac{1}{|x_2|^{2i}}-\frac{1}{|x_1|^{2i}})\right)\\
&=& \frac{1}{|x_1|+|x_2|}\left(\frac{|x_1|}{|x_1|^2-1}+\frac{|x_2|}{|x_2|^2-1}+\frac{1}{|x_2|^2-1}-\frac{1}{|x_1|^2-1}\right)\\
&=&\frac{1}{|x_1|+|x_2|}\left(\frac{1}{|x_2|-1}+\frac{1}{|x_1|+1}\right)\\
&=& \frac{1}{|c|-|b|-1}.
\end{eqnarray*}
\end{proof}

\bigskip

\section{\bf Consecutive collinear digit set}

In the section, we characterize the connectedness of the self-affine sets $T(A,\mathcal{D})$ associated with digit sets $\mathcal{D}=\{0,1,\dots, m\}v$. The necessary and sufficient conditions are  given for $ T(A,\mathcal{D})$ to be connected.

\begin{thm} \label{thm3.1}
Let the characteristic polynomial of the expanding integer matrix $A$ be $f(x)=x^2+bx+c$ and a digit set $\mathcal{D}=\{0,1,\dots, m\}v$
where $m\geq 1$ is  integral and $v\in \mathbb{R}^2$ such that $\{v,  Av\}$ are linearly independent. If $\Delta=b^2-4c\geq0$ and
the eigenvalues of $A$ have moduli   $\ge 2$, then

 \medskip
{\rm(i)}  if $c=4$, then $ T(A,\mathcal{D})$ is connected if and only if $m\geq 2$;
 \medskip

{\rm(ii)} otherwise $c\ne 4$, then $ T(A,\mathcal{D})$ is connected if and only if

 \begin{equation*}
m\geq \left\{
\begin{array}{ll}
\max\{c-|b|+1,|b|-1\} & \quad  c>0 \\  \\
|c|-|b|-1 & \quad  c<0.
\end{array}
\right.
\end{equation*}
\end{thm}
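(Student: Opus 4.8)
The plan is to use the criterion of Lemma~\ref{lem-con-criterion}: $T(A,\mathcal{D})$ is connected if and only if $v\in T-T$, and $v\in T-T$ holds precisely when $v$ admits a representation $v=\sum_{i=1}^\infty b_i A^{-i}v$ with all $b_i\in\Delta D=\{-m,\dots,m\}$. Since $A^{-i}v=\alpha_i v+\beta_i Av$ and $\{v,Av\}$ is a basis, this amounts to the scalar identities $\sum b_i\alpha_i=1$ and $\sum b_i\beta_i=0$. I will treat the sufficiency and necessity directions separately, using Lemma~\ref{thm2.5} for the exact values of $\tilde\alpha,\tilde\beta$ and the neighbor-generating recursion \eqref{(2.1)} for the necessity.

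For necessity, suppose $T(A,\mathcal{D})$ is connected, so $v=\gamma v+\delta Av\in T-T$ with $\gamma=1,\delta=0$, and write $v=\sum b_iA^{-i}v$ with $|b_i|\le m$. Apply the neighbor-generating algorithm: by \eqref{(2.1)} every $l_n=\gamma_n v+\delta_n Av$ is again a neighbor, with $|\gamma_n|\le m\tilde\alpha$, $|\delta_n|\le m\tilde\beta$ for all $n$, where $\tilde\alpha,\tilde\beta$ are given by Lemma~\ref{thm2.5}. Starting from $(\gamma_0,\delta_0)=(1,0)$ and reading off the first coordinate of \eqref{(2.1)}, one gets $\gamma_1=-c\delta_0-b_1=-b_1$ and $\delta_1=\gamma_0-b\delta_0=1$; iterating, $\delta_n$ satisfies the same recursion as $\beta_i^{-1}$-type sequences and cannot stay bounded by $m\tilde\beta=m/(c-|b|+1)$ (resp. $m/(|c|-|b|-1)$) unless $m$ is at least the claimed bound. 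More precisely, I will choose a specific short orbit — e.g. track $(\gamma_n,\delta_n)$ for one or two steps and extremize the entries over the allowed digit ranges — to force $m\tilde\beta\ge 1$, equivalently $m\ge c-|b|+1$ (resp. $m\ge |c|-|b|-1$), and separately $m\tilde\alpha\ge$ something that yields $m\ge |b|-1$ in the $c>0$ case. The case $c=4$ is where the bound $\max\{c-|b|+1,|b|-1\}$ would read $\max\{5-|b|,|b|-1\}\ge 2$ but the moduli-$\ge 2$ hypothesis with $\Delta\ge0$ forces $|b|=4$ (roots $-2,-2$) or $|b|=5$ (roots from $x^2\pm5x+4$), so the generic formula happens to collapse to the threshold $m\ge 2$; I will verify this boundary arithmetic directly.

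For sufficiency, I will explicitly construct a representation $v=\sum_{i=1}^\infty b_iA^{-i}v$ with $b_i\in\{-m,\dots,m\}$ whenever $m$ meets the stated bound. Equivalently — and this is the route I would actually take — I will invoke the Height Reducing Property: by Proposition~\ref{thm2.2}, if the characteristic polynomial (or a polynomial multiple) has a representation $x^k+a_{k-1}x^{k-1}+\cdots+a_1x\pm c'$ with all $|a_i|$ small relative to the digit range, connectedness follows. Concretely, for $c>0$ I expect to multiply $f(x)=x^2+bx+c$ by a suitable $g(x)=\sum \varepsilon_j x^j$ (signs chosen to telescope the middle coefficients) so that the product has all intermediate coefficients bounded by $m$; the bound $m\ge\max\{c-|b|+1,|b|-1\}$ is exactly what makes the leftover coefficients after one reduction step lie in $[-m,m]$. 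For $c<0$, since $\Delta\ge 0$ forces the two real roots to have opposite signs (product $c<0$), I use Lemma~\ref{lem-sym} to normalize $b$ and run the analogous reduction, obtaining the threshold $m\ge |c|-|b|-1$. The $c=4$ subcase is handled by exhibiting the representation by hand for $m=2$ (only finitely many $(b,c)$ survive the hypotheses).

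The main obstacle will be the sufficiency construction: matching the digit bound $m$ exactly to an explicit infinite expansion of $v$ (or an explicit HRP-multiplier $g$) requires careful bookkeeping of which coefficient is being reduced and controlling the sign choices so that no intermediate digit exceeds $m$ in absolute value — this is where the bulk of the computation lies, and it must be split into the subcases $c>0$ versus $c<0$, and within $c>0$ into the regimes where $c-|b|+1$ versus $|b|-1$ is the dominant term. A secondary delicate point is confirming that the necessity estimate is tight, i.e. that when $m$ is one below the bound there is genuinely no valid representation; here the boundedness of the neighbor orbit $(\gamma_n,\delta_n)$ combined with an integrality/parity argument should close the gap.
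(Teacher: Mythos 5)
Your overall strategy (bound the neighbor orbit $(\gamma_n,\delta_n)$ via \eqref{(2.1)} using the exact values of $\tilde\alpha,\tilde\beta$ from Lemma \ref{thm2.5} for necessity; exhibit an explicit expansion $v=\sum_i b_iA^{-i}v$ for sufficiency) is the same as the paper's, but two steps in your plan are wrong or missing. First, the $c=4$ boundary analysis: the hypothesis that the eigenvalues have moduli $\ge 2$ together with $|x_1x_2|=4$ forces $|x_1|=|x_2|=2$, and $\Delta\ge0$ then gives $x_1=x_2=\pm2$, so only $|b|=4$ occurs; your proposed case $|b|=5$ (roots of $x^2\pm5x+4$, namely $\pm1$ and $\pm4$) violates the moduli hypothesis. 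More importantly, for $(|b|,c)=(4,4)$ the generic formula reads $\max\{c-|b|+1,|b|-1\}=\max\{1,3\}=3$, not $2$ --- it does not ``collapse to the threshold $m\ge2$.'' Case (i) is stated separately precisely because the generic answer fails there: the necessity argument for $m\ge|b|-1$ breaks down when $c=4$, and the sufficiency for $m=2$ requires a different expansion (the paper uses $I=2A^{-1}+2A^{-2}-2\sum_{i\ge3}A^{-i}$, with digits of size $2$ rather than $|b|-1=3$).

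Second, the necessity of $m\ge|b|-1$ for $c>4$ is left in your sketch as ``$m\tilde\alpha\ge$ something''; this is the one non-routine step and it does not fall out of a single orbit bound. The paper argues by contradiction: if $m\le|b|-2=(|x_1|-1)+(|x_2|-1)$, then $m\tilde\beta=\frac{m}{(|x_1|-1)(|x_2|-1)}\le\frac{1}{|x_1|-1}+\frac{1}{|x_2|-1}<2$ (strict because $c>4$ and both $|x_i|\ge2$), while the second step of the orbit gives $2\le|b|-m\le|(\ell_2-\ell_1)b+b_1|\le m\tilde\beta$, a contradiction; you would need to supply this (or an equivalent) argument, and note it uses the $\tilde\beta$ bound, not $\tilde\alpha$. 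On the sufficiency side your plan is workable in spirit, but Proposition \ref{thm2.2} as stated only covers $m=c-1$; what is actually needed is the direct identity $(A-I)(A+(b+1)I)=-(c+b+1)I$, which yields $v=(|b|-1)A^{-1}v-(c-|b|+1)\sum_{i\ge2}A^{-i}v\in T-T$ for $c>0$, and for $c<0$ an alternating expansion with digits $|b|+1$ and $|c|-|b|-1$, where one must also verify $|c|-|b|-1\ge|b|+1$ (which follows from $|c|-|b|-1=(|x_1|+1)(|x_2|-1)$).
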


 \medskip

\begin{proof}
Let $T+l$ be a neighbor of $T$, then $l=\gamma v+\delta Av=\sum_{i=1}^{\infty}b_iA^{-i}v$, $b_i\in \Delta D=\{0,\pm 1,\pm2,\dots, \pm m\}$.
By (\ref{estimate}), we have
\begin{align}\label{3.2}
|\gamma|\leq m\tilde{\alpha}; \qquad   |\delta|\leq m\tilde{\beta}.
\end{align}
Suppose $(T+\ell_1v)\cap (T+\ell_2v)\ne\emptyset$ for $0\leq \ell_1<\ell_2\leq m$,
 then $(\ell_2-\ell_1)v=\sum_{i=1}^{\infty}b_iA^{-i}v, \ b_i\in \Delta D$.
 By \eqref{(2.1)}, we obtain  $l^*= -((\ell_2-\ell_1)c+b_2)v-(b(\ell_2-\ell_1)+b_1)Av$.

\medskip

\noindent (i)  If $c=4$, then $|b|=4$,  by \eqref{3.2} and Lemma \ref{thm2.5}, the connectedness of $T(A, {\mathcal D})$ can imply that

\begin{align*}
(\ell_2-\ell_1)|b|-m\leq |(\ell_2-\ell_1)b+b_1|\leq \frac{m}{c-|b|+1}= m,
\end{align*}
which further implies  that  $m\ge 2(\ell_2-\ell_1)\ge 2$.

\medskip

Conversely, if $b=-4$, then $f(x)=x^2-4x+4$. By using $f(A)=0$ and $Af(A)=0$, we have $A^3-3A^2+4I=0$ and
$A^3-A^2=2A(A-I)+2(A-I)-2I$ which yields
\begin{equation*}
(A-I)=2A^{-1}(A-I)+2A^{-2}(A-I)-2A^{-2}
\end{equation*} and
\begin{equation*}
I=2A^{-1}+2A^{-2}-2\sum_{i=3}^{\infty}{A}^{-i}.
\end{equation*}
Then \begin{equation*}
v=2A^{-1}v+2A^{-2}v-2\sum_{i=3}^{\infty}{A}^{-i}v\in T-T.
\end{equation*}
Therefore $T$ is connected by Lemma \ref{lem-con-criterion}. (see Figure \ref{fig1})

If $b=4$, then $f(x)=x^2+4x+4$. By Lemma \ref{lem-sym}, the connectedness of $T(A, {\mathcal D})$ is the same as that of
$T(-A, {\mathcal D})$ in which the characteristic polynomial of $-A$ is $f(x)=x^2-4x+4$.

\medskip

\noindent (ii)  Necessity:\quad  If $c>0$, then by \eqref{3.2} and Lemma \ref{thm2.5} , we have
\begin{align}\label{3.3}
(\ell_2-\ell_1)c-m\leq |(\ell_2-\ell_1)c+b_2|\leq m\frac{|b|-1}{c-|b|+1};
\end{align}
\begin{align}\label{3.4}
(\ell_2-\ell_1)|b|-m\leq |(\ell_2-\ell_1)b+b_1|\leq \frac{m}{c-|b|+1}.
\end{align}
\eqref{3.3} implies  that $m\geq (\ell_2-\ell_1)(c-|b|+1)\geq c-|b|+1$.

Now we prove  $m\geq|b|-1$. Let $x_1,x_2$ denote the roots of $x^2+bx+c=0$, if $m<|b|-1$, then $m\leq|b|-2=|x_1|-1+|x_2|-1$.
We have
\begin{align}\label{3.5}
\frac{m}{(|x_1|-1)(|x_2|-1)}\leq \frac{1}{|x_1|-1}+\frac{1}{|x_2|-1}<2.
\end{align}
The last strict inequality holds due to the fact that $c>4$ and $|x_1|,|x_2|\geq 2$. From \eqref{3.4} and \eqref{3.5}, it follows that
\begin{align}\label{3.6}
2\leq|b|-m\leq(l_2-l_1)|b|-m \leq \frac{m}{c-|b|+1}=\frac{m}{(|x_1|-1)(|x_2|-1)}<2
\end{align} which is a contradiction.
Hence  $m\geq|b|-1$.

\medskip

If $c<0$, then by \eqref{3.2} and Lemma \ref{thm2.5}, we have
\begin{align}\label{3.7}
(\ell_2-\ell_1)|c|-m\leq |(\ell_2-\ell_1)c+b_2|\leq m\frac{|b|+1}{|c|-|b|-1}
\end{align}
implying  that $m\geq (l_2-l_1)(|c|-|b|-1)\geq |c|-|b|-1$.

\bigskip

\noindent Sufficiency: \quad If $c>0$, and $m\geq \max\{c-|b|+1, |b|-1\}$, it suffices to show that $v\in T-T$ by Lemma \ref{lem-con-criterion}.

\medskip
When $b<0$, by using $f(A)=A^2+bA+cI=0$, we have $A^2+bA-(b+1)I=-(c+b+1)I$, i.e., $(A-I)(A+(b+1)I)=-(c+b+1)I$. It follows that
$$I+(b+1)A^{-1}=-(c+b+1)A^{-1}\sum_{i=1}^{\infty}A^{-i}.$$
Hence
\begin{eqnarray*}
I&=&(-b-1)A^{-1}+\sum_{i=2}^{\infty}-(c+b+1)A^{-i}\\
 &=&(|b|-1)A^{-1}+\sum_{i=2}^{\infty}-(c-|b|+1)A^{-i}.
\end{eqnarray*}
Then $v=(|b|-1)A^{-1}v+\sum_{i=2}^{\infty}-(|c|-|b|+1)A^{-i}v\in T-T$, and $T$ is connected. When $b>0$,
Lemma \ref{lem-sym} and the above argument also yield that $T$ is connected. (see Figure \ref{fig2})

\medskip

If $c<0$, suppose $m\geq |c|-|b|-1$, then $m\geq|b|+1$ (Indeed, if $x_1,x_2$ is the roots of $x^2+bx+c=0$, without loss of generality, we let  $|x_1|\geq|x_2|$,
then$|c|-|b|-1=|x_1x_2|-(|x_1|-|x_2|)-1=(|x_1|+1)(|x_2|-1)\geq |x_1|+1>|x_1|-|x_2|+1=|b|+1$).

\medskip

When $b<0$, by using $f(A)=A^2+bA+cI=0$, we have $A^2+A=(-b+1)(A+I)+(-c+b-1)I$. Then
\begin{eqnarray*}
I&=&(-b+1)A^{-1}+(-c+b-1)\sum_{i=2}^{\infty}{(-A)}^{-i}\\
&=&(|b|+1)A^{-1}+(|c|-|b|-1)\sum_{i=2}^{\infty}{(-A)}^{-i}\\
&=&(|b|+1)A^{-1}+(|c|-|b|-1)\sum_{k=1}^{\infty}{A}^{-2k} -(|c|-|b|-1)\sum_{k=1}^{\infty}{A}^{-2k-1}.
\end{eqnarray*}
Hence $v\in T-T$ and $T$ is connected by Lemma \ref{lem-con-criterion}.  When $b>0$, Lemma \ref{lem-sym}
and the above argument also yield that $T$ is connected. (see Figure \ref{fig3})
\end{proof}

\begin{figure}[h]
 \centering
\subfigure[m=1]{
 \includegraphics[width=5cm]{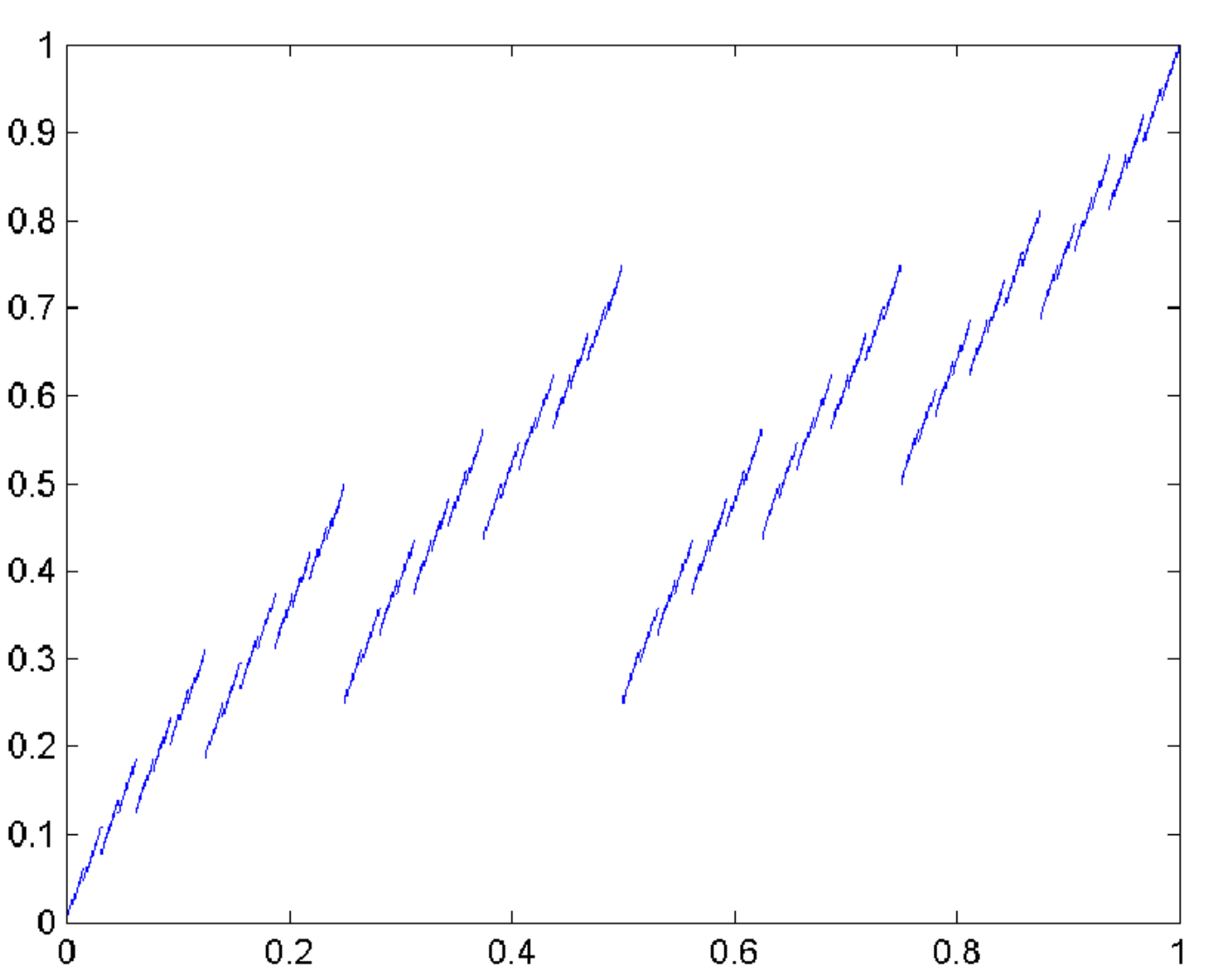}
}\qquad
 \subfigure[m=2]{
\includegraphics[width=5cm]{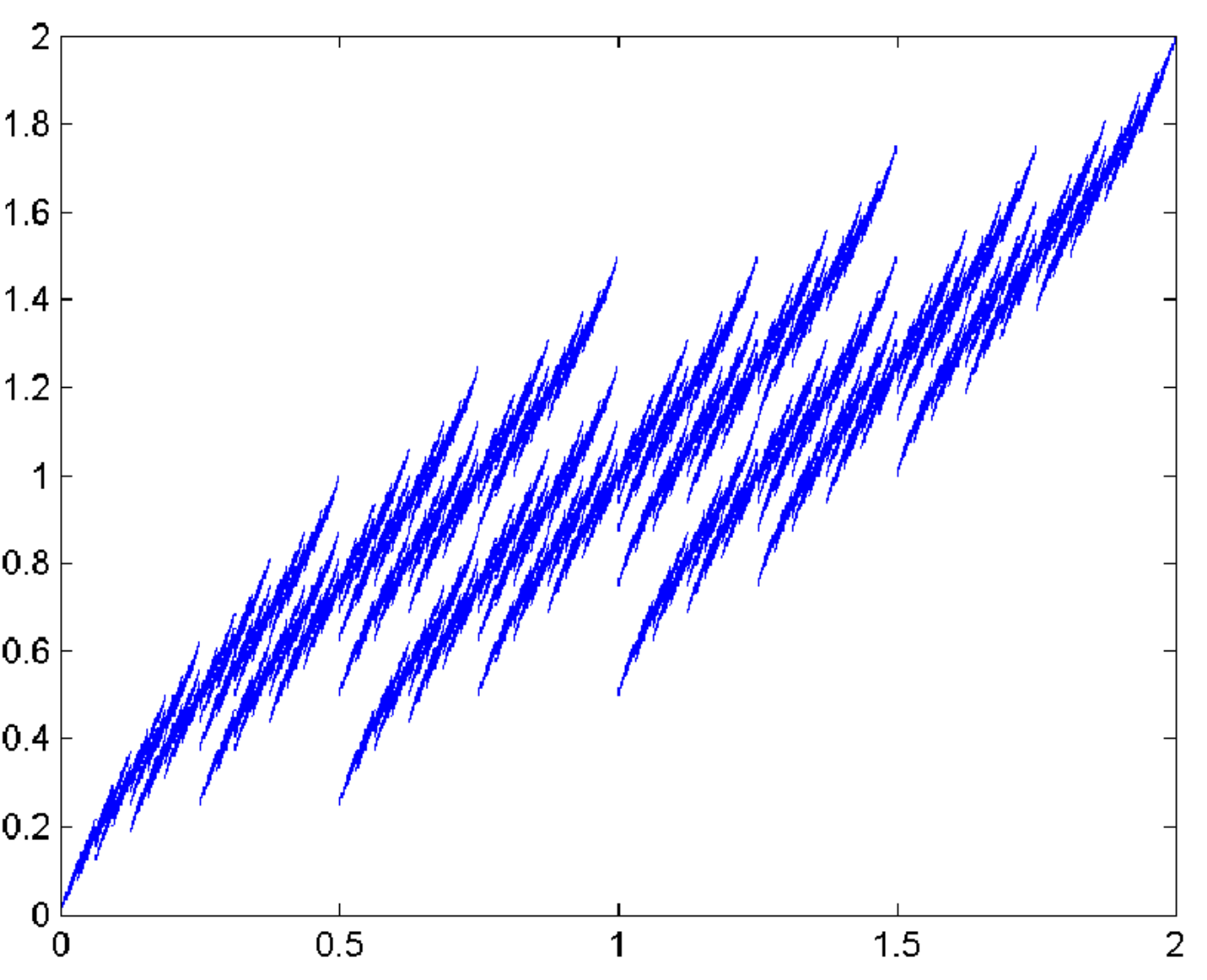}
}
\caption{(a) is disconnected and (b) is connected where $A=[2,0;-1,2], v=(1,0)^t$.}\label{fig1}
\end{figure}

\begin{figure}[h]
\centering
\subfigure[m=13]{
 \includegraphics[width=5cm]{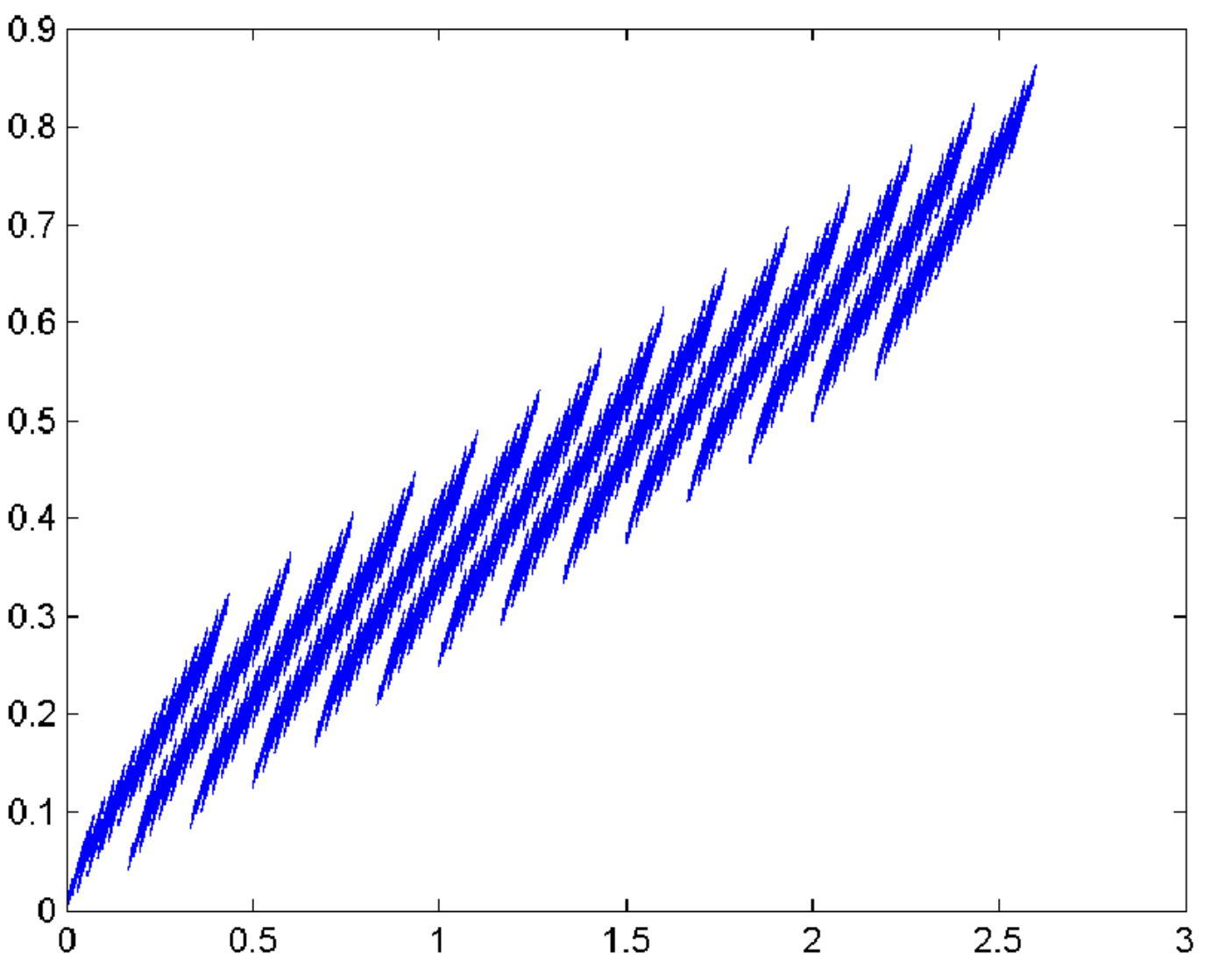}
 }\qquad
\subfigure[m=15]{
\includegraphics[width=5cm]{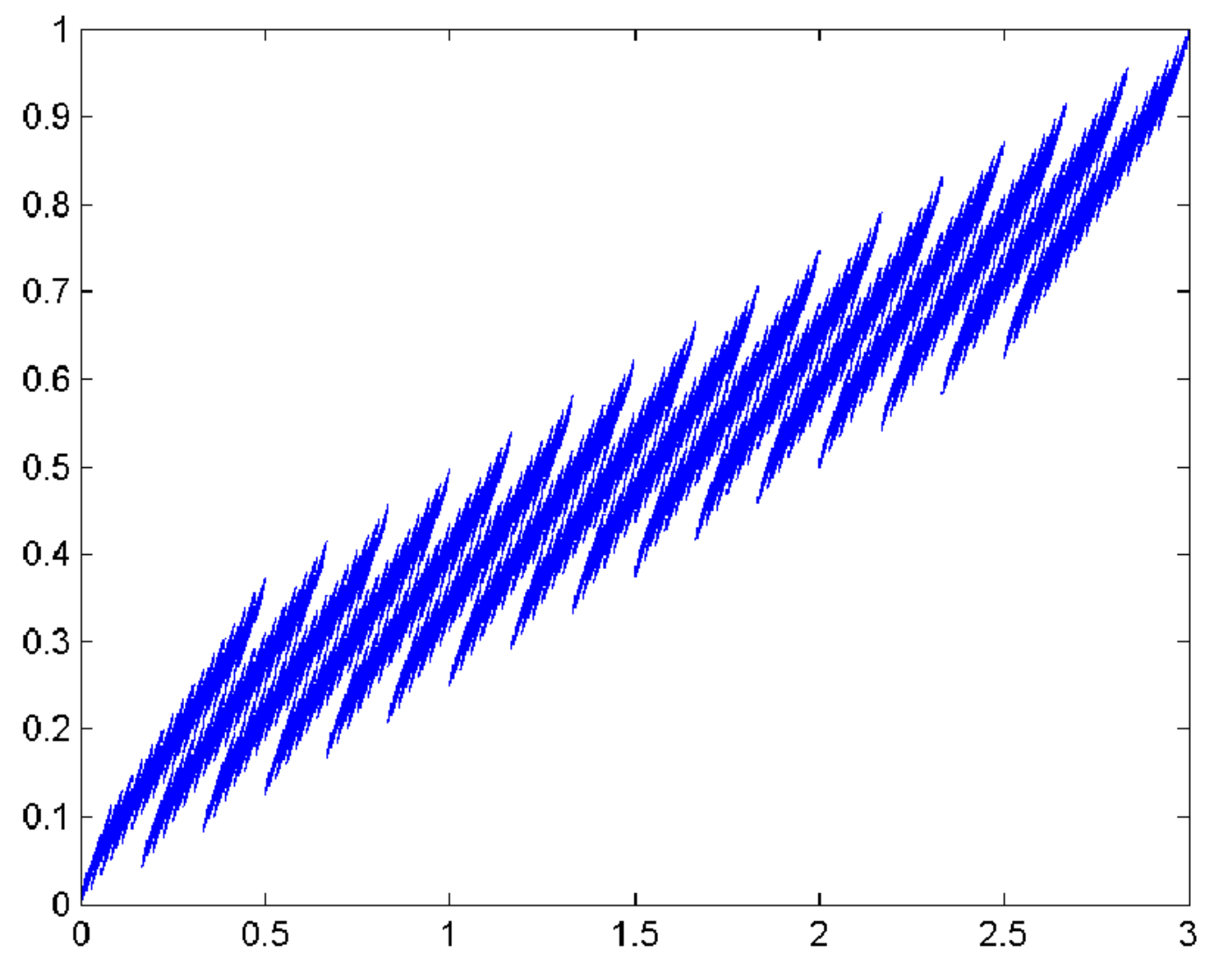}
}
\caption{(a) is disconnected and (b) is connected where $A=[6,0;-1,4]$, $v=(1,0)^t$.}\label{fig2}
\end{figure}

\begin{figure}[h]
 \centering
\subfigure[m=19]{
 \includegraphics[width=5cm]{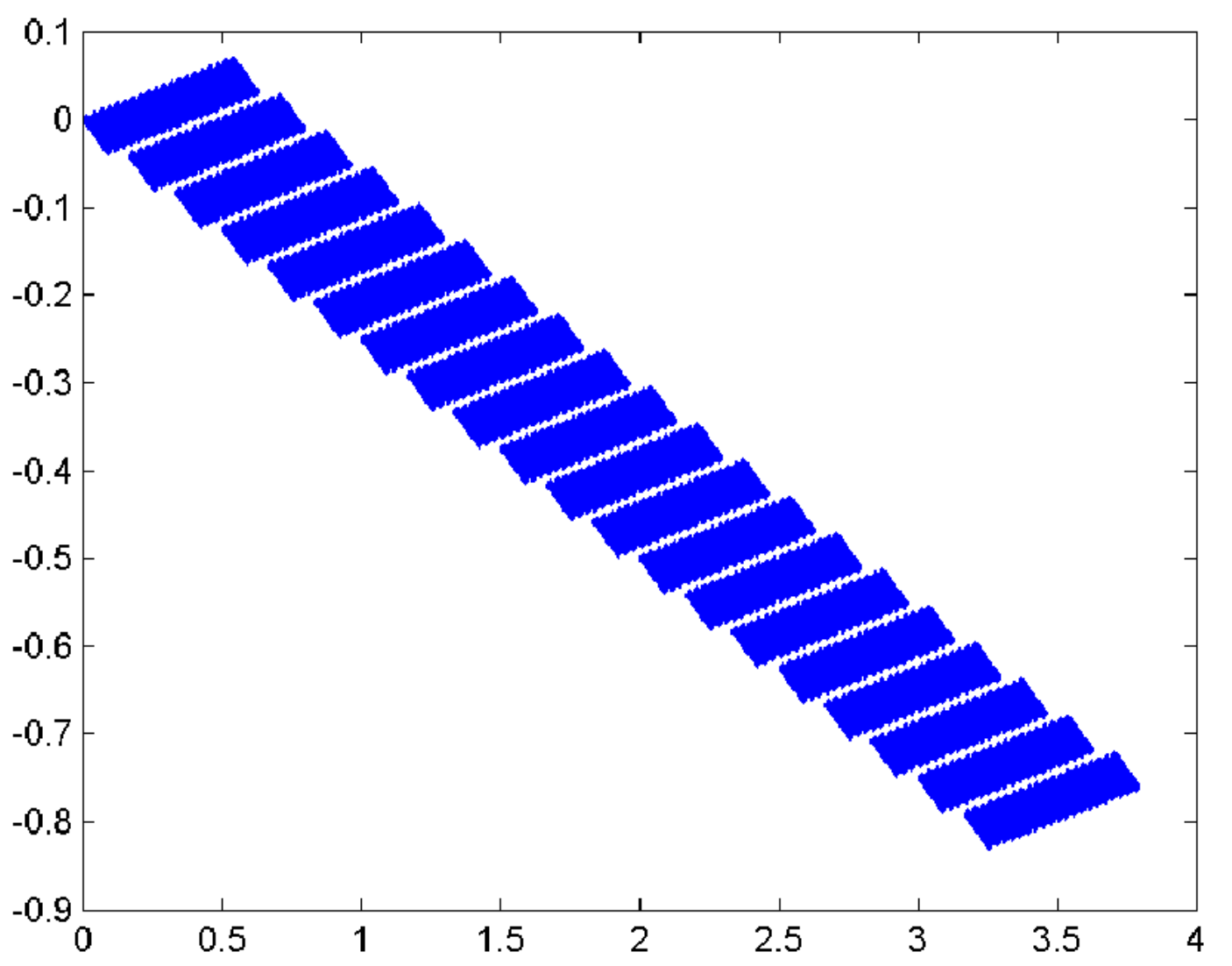}
}\qquad
\subfigure[m=21]{
 \includegraphics[width=5cm]{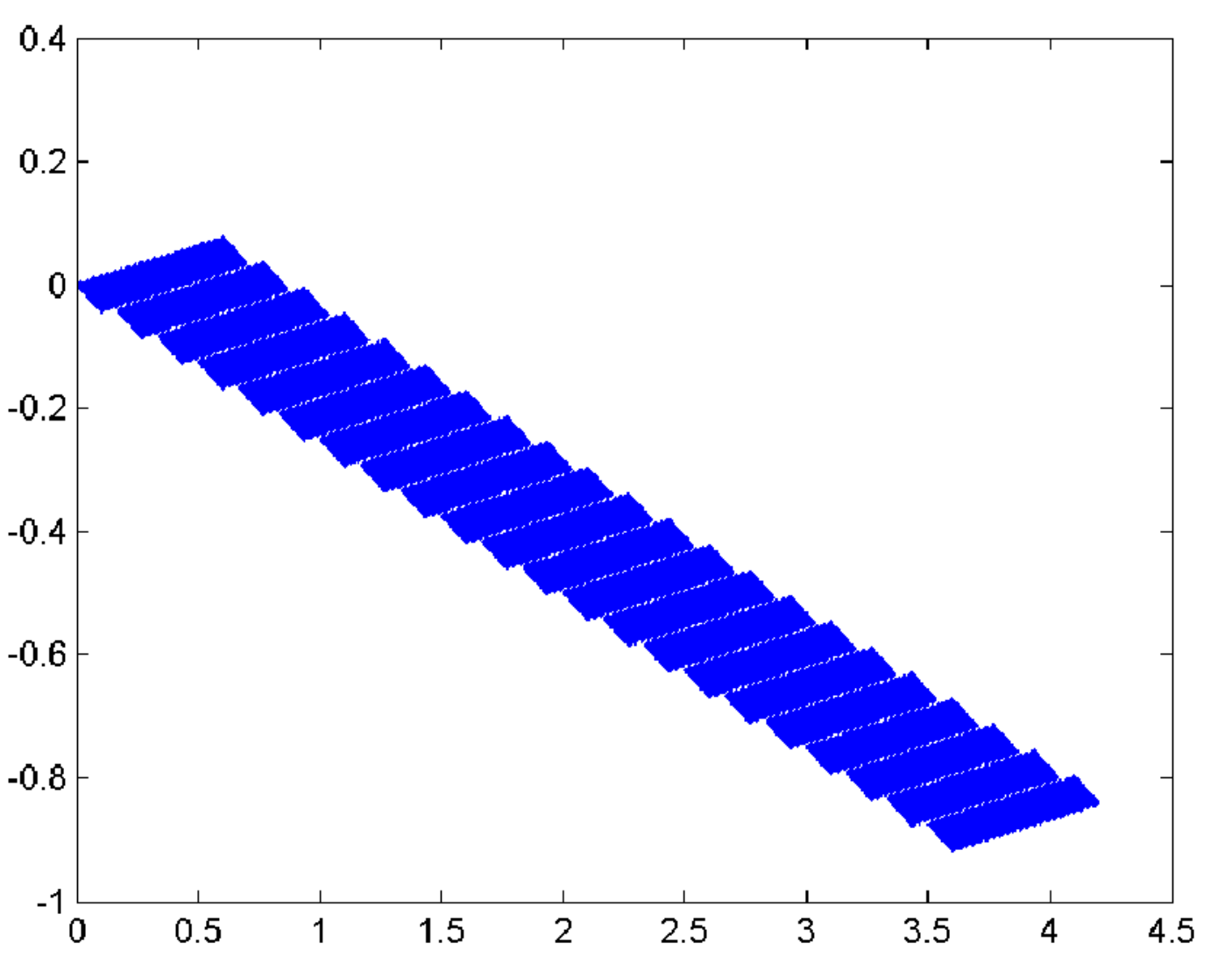}
}
\caption{(a) is disconnected and (b) is connected where $A=[6,0;-1,-4]$, $v=(1,0)^t$}\label{fig3}
\end{figure}

In the proof above, the condition that eigenvalues of $A$ have moduli $\ge 2$ is essential. If otherwise, in the case that the moduli of the eigenvalues $< 2$,
e.g., the moduli are close to $1$, we have no idea about the conditions for $ T(A,\mathcal{D})$ to be connected by estimating $\tilde{\alpha}$  or $\tilde{\beta}$.

 \medskip

On the other hand, if  $\Delta=b^2-4c<0$, it is also difficult to  compute the exact values of  $\tilde{\alpha}$  and  $\tilde{\beta}$ in general. However,  under certain special situations, the exact values of $\tilde{\alpha}$ and  $\tilde{\beta}$ can still be calculated as well.

\begin{thm} \label{thm3.2}
Let the characteristic polynomial of $A$ be $f(x)=x^2+bx+c$ and a digit set $\mathcal{D}=\{0,1,\dots, m\}v$ where $m\geq 1$ and $v\in \mathbb{R}^2$ such that $\{v,  Av\}$ are linearly independent.  If  $\Delta=b^2-4c<0$, then $ T(A,\mathcal{D})$ is connected if and only if
\begin{equation}\label{4.3}
m\geq \left\{
\begin{array}{ll}
\max\{c-|b|+1, |b|-1\} & \quad  b^2=3c \\  \\
c-|b|+1 & \quad  b^2=2c, \  b^2=c \\  \\
c-1 & \quad  b=0.
\end{array}
\right.
\end{equation}
\end{thm}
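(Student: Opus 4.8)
The plan is to run the same two-sided scheme as in Theorem \ref{thm3.1}: establish the necessity by bounding the neighbor coefficients $\gamma,\delta$ via $\tilde\alpha,\tilde\beta$, and establish sufficiency by exhibiting an explicit expansion $v\in T-T$ through an identity derived from $f(A)=0$. The crucial preliminary step, replacing Lemma \ref{thm2.5}, is to compute $\tilde\alpha$ and $\tilde\beta$ exactly in the four special cases $b^2=3c$, $b^2=2c$, $b^2=c$, $b=0$. Here $\Delta<0$, so $r_1,r_2$ are complex conjugates and $|\alpha_i|,|\beta_i|$ no longer form a plain geometric series; however, in each of these cases the ratio $r_1/r_2$ is a root of unity (of order $6,8,12$ or $4$ respectively — since $\cos\theta = -b/(2\sqrt c)$ equals $\pm\tfrac12, \pm\tfrac{1}{\sqrt2}, \pm\tfrac{\sqrt3}{2}, 0$), so the sequences $(\alpha_i),(\beta_i)$ are (up to the geometric decay factor $c^{-i/2}$ coming from $|r_1r_2| = 1/c$) periodic. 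That periodicity is exactly what makes $\sum|\alpha_i|$ and $\sum|\beta_i|$ summable in closed form: split the sum into residue classes mod the period, each of which is a genuine geometric series. I expect to get $\tilde\alpha = (|b|-1)/(c-|b|+1)$ (in the cases with $b\neq 0$, analogous to the real case) and $\tilde\beta = 1/(c-|b|+1)$, with the $b=0$ case giving $\tilde\alpha$ of order $\sqrt c$ and $\tilde\beta = 1/(c-1)$.

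For necessity, assume $T(A,\mathcal D)$ connected. By Lemma \ref{lem-con-criterion}, $v\in T-T$, and more usefully $(\ell_2-\ell_1)v\in T-T$ whenever $(T+\ell_1 v)\cap(T+\ell_2 v)\neq\emptyset$; applying the neighbor-generating step \eqref{(2.1)} once to such an $l=(\ell_2-\ell_1)v$ produces the neighbor $l^\ast = -((\ell_2-\ell_1)c+b_2)v - (b(\ell_2-\ell_1)+b_1)Av$ with $b_1,b_2\in\Delta D$, so $|b_1|,|b_2|\le m$. Combining $|{-}((\ell_2-\ell_1)c+b_2)|\le m\tilde\alpha$ and $|{-}(b(\ell_2-\ell_1)+b_1)|\le m\tilde\beta$ with the triangle inequality gives $(\ell_2-\ell_1)c - m \le m\tilde\alpha$ and $(\ell_2-\ell_1)|b| - m \le m\tilde\beta$; plugging in the computed $\tilde\alpha,\tilde\beta$ and using $\ell_2-\ell_1\ge 1$ yields $m\ge c-|b|+1$ (resp. $m\ge c-1$ when $b=0$) directly. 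The extra lower bound $m\ge|b|-1$ in the case $b^2=3c$ is obtained, as in the proof of Theorem \ref{thm3.1}, by a separate contradiction argument: if $m<|b|-1$ one squeezes a quantity strictly between $2$ and a number $<2$ using the $\tilde\beta$ inequality together with $|x_1|=|x_2|=\sqrt c$ and $c=b^2/3\ge$ (the relevant size bound). I should double-check which inequalities among the four cases force which bound, and in particular that no hidden second bound is needed for $b^2=2c$ and $b^2=c$.

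For sufficiency, assume $m$ satisfies \eqref{4.3}; by Lemma \ref{lem-sym} I may take $b\le 0$, i.e. $b=-|b|$. The task is to write $v$ as $\sum b_i A^{-i}v$ with all $|b_i|\le m$. Starting from $f(A)=A^2+bA+cI=0$, I factor as $(A-I)(A-(|b|-1)I) = (c-|b|+1)\cdot(\text{something})$ — more precisely $A^2+bA+cI = (A-I)(A-(1-b)I) + (c-1-b+1)\cdot\text{? }$; the correct identity, mirroring the $c>0$ computation in Theorem \ref{thm3.1}, is $A^2 + bA - (b+1)I = -(c+b+1)I$, giving $I = (|b|-1)A^{-1} - (c-|b|+1)\sum_{i\ge 2}A^{-i}$, whence $v = (|b|-1)A^{-1}v - (c-|b|+1)\sum_{i\ge2}A^{-i}v \in T-T$ since the digits $|b|-1$ and $c-|b|+1$ are both $\le m$ in absolute value. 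For $b=0$ the analogous manipulation of $A^2+cI=0$ gives $I = (c-1)\sum_{k\ge1}(-1)^{k+1}A^{-2k}$ (all digits $c-1\le m$), so again $v\in T-T$. I expect the main obstacle to be the $\tilde\alpha$ computation: the residue-class bookkeeping for the root-of-unity ratio, keeping track of signs of $\alpha_i$ to get $|\alpha_i|$ rather than $\alpha_i$, and confirming that all four cases really do collapse to the clean closed forms claimed — the $b=0$ case in particular needs care because the period-$4$ structure makes $\alpha_i$ vanish for odd $i$ and the surviving terms alternate, so $\tilde\alpha$ is not simply $(|b|-1)/(c-|b|+1)$ with $|b|=0$.
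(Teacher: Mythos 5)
Your plan is essentially the paper's own proof: split $\sum_i|\alpha_i|$ into geometric series over residue classes modulo the order of $r_1/r_2$, get necessity from the neighbor bound \eqref{estimate} applied to $l^*=-((\ell_2-\ell_1)c+b_2)v-(b(\ell_2-\ell_1)+b_1)Av$, and get sufficiency from the identity $I=(|b|-1)A^{-1}-(c-|b|+1)\sum_{i\ge2}A^{-i}$ (plus Lemma \ref{lem-sym}). However, two of your anticipated closed forms are wrong, and you should know which ones matter. First, in the complex cases $\tilde\alpha$ is \emph{not} $(|b|-1)/(c-|b|+1)$: for example, when $b^2=c$ the computation gives $\tilde\alpha=(b^2+1)/(|b|^3-1)$, whereas $(|b|-1)/(c-|b|+1)=(b^2-1)/(|b|^3+1)$. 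As a result $c/(1+\tilde\alpha)$ is not exactly $c-|b|+1$ but equals $c-|b|+\varepsilon$ with $0<\varepsilon<1$ (e.g.\ $\varepsilon=\frac{|b|-1}{|b|+1}$ when $b^2=c$), so the conclusion $m\ge c-|b|+1$ additionally requires that $m$ is an integer; make that step explicit. Second, for $b=0$ your guess that $\tilde\alpha$ is of order $\sqrt c$ would break the necessity of $m\ge c-1$ (it would only yield $m$ of order $\sqrt c$); in fact $\alpha_i=0$ for odd $i$ and the even-indexed terms sum to $\tilde\alpha=1/(c-1)$ exactly, which is precisely what gives $m\ge c-1$. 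Your $b=0$ sufficiency identity has a sign slip (since $A^{-2k}=(-1)^kc^{-k}I$ one gets $I=(c-1)\sum_{k\ge1}(-1)^kA^{-2k}$), but the digits still have modulus $c-1\le m$, so it works; the paper instead just cites Proposition \ref{thm2.2} here. Finally, for $b^2=3c$ the extra bound $m\ge|b|-1$ needs a separate argument only when $|b|=3$ (for $|b|>3$ one has $c-|b|+1=b^2/3-|b|+1>|b|-1$ automatically), and there the $\tilde\beta$ inequality with $\tilde\beta=14/13$ gives $m\ge 39/27$, hence $m\ge2$; you should also verify $c-|b|+1\ge|b|-1$ in the cases $b^2=2c$ and $b^2=c$ (it reduces to $(|b|-2)^2\ge0$ and $(|b|-1)^2+1>0$) so that the digit $|b|-1$ in your sufficiency expansion is admissible.
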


\begin{proof}
From Lemma \ref{lem-sym}, we can suppose $b<0$. Let  $r_{1,2}=\frac{-b\pm\sqrt{b^2-4c}}{2c}$ be the complex roots of $cx^2+bx+1=0$ as in Lemma \ref{thm2.3} and let  $r:=|r_1|=|r_2|=\frac{1}{\sqrt{c}}$. Then  $r_{1}=r e^{i\theta}$ and $r_{2}=r e^{-i\theta}$ where $\theta$ is the argument of $r_{1}$. We show the necessity first by assuming $T(A, {\mathcal D})$ is connected.

\medskip

 (i) If $b^2=3c$, then $\theta=\frac{\pi}{6}$ and
\begin{equation*}
 |\alpha_i|=\left|\frac{c(r_1^{i+1}-r_2^{i+1})}{\Delta^{1/2}}\right|=\frac{2cr^{i+1}|\sin((i+1)\frac{\pi}{6})|}{\sqrt{c}};
\end{equation*}

\begin{equation*}
 |\beta_i|=\left|\frac{(r_1^{i+1}-r_2^{i+1})}{\Delta^{1/2}}\right|=\frac{2r^{i+1}|\sin((i+1)\frac{\pi}{6})|}{\sqrt{c}}.
\end{equation*}

Hence

\begin{eqnarray*}
\tilde{\alpha}&=&\sum_{i=1}^\infty|\alpha_i|=\sqrt{c}\left((\sqrt{3}r^2+2r^3+\sqrt{3}r^4+r^5+r^7)\sum_{j=0}^\infty r^{6j} \right)\\
&=&\frac{3|b|^5+6b^4+9|b|^3+9b^2+27}{b^6-27};
\end{eqnarray*}
Analogous to \eqref{3.3}, we have
\begin{align*}
c-m\le (\ell_2-\ell_1)c-m\leq |(\ell_2-\ell_1)c+b_2|\leq m\frac{3|b|^5+6b^4+9|b|^3+9b^2+27}{b^6-27}
\end{align*}
which implies that
\begin{eqnarray*}
m&\geq& \frac{b^6-27}{3(b^4+3|b|^3+6b^2+9|b|+9)} \\
&=& \frac{b^2}{3}-|b|+\frac{3b^4+9|b|^3+18b^2+27|b|-27}{3b^4+9|b|^3+18b^2+27|b|+27}\\
&=& c-|b|+\frac{3b^4+9|b|^3+18b^2+27|b|-27}{3b^4+9|b|^3+18b^2+27|b|+27}.
\end{eqnarray*}
Thus $m\geq c-|b|+1$ as $0<\frac{3b^4+9|b|^3+18b^2+27|b|-27}{3b^4+9|b|^3+18b^2+27|b|+27}<1$ and $m$ is integral. (see Figure \ref{fig7})

If $|b|>3$, then $c-|b|+1>|b|-1$ is always true; if $|b|=3$ then $c=3$, and
\begin{eqnarray*}
\tilde{\beta}=\sum_{i=1}^\infty|\beta_i|=\frac{1}{\sqrt{c}}\left((r+\sqrt{3}r^2+2r^3+\sqrt{3}r^4+r^5)\sum_{j=0}^\infty r^{6j} \right)
=\frac{14}{13}.
\end{eqnarray*}
Analogous to \eqref{3.4}, we have
\begin{align*}
3-m\le (\ell_2-\ell_1)3-m\leq |(\ell_2-\ell_1)b+b_1|\leq m\frac{14}{13}
\end{align*}
and $m\geq \frac{39}{27}$.  Therefore $m\geq 2=|b|-1$.

\medskip

(ii) If $b^2=2c$, then $\theta=\frac{\pi}{4}$ and
\begin{equation*}
 |\alpha_i|=\left|\frac{c(r_1^{i+1}-r_2^{i+1})}{\Delta^{1/2}}\right|=\frac{2cr^{i+1}|\sin((i+1)\frac{\pi}{4})|}{|b|}.
\end{equation*}

Then

\begin{eqnarray*}
\tilde{\alpha}=\sum_{i=1}^\infty|\alpha_i|=|b|\left(r^2\sum_{j=0}^\infty r^{4j}+\frac{\sqrt{2}}{2}r^3\sum_{k=0}^\infty r^{2k} \right)=
\frac{2|b|^3+2b^2+4}{b^4-4}.
\end{eqnarray*}
Analogous to  \eqref{3.3}, we have
\begin{align*}
c-m\le (\ell_2-\ell_1)c-m\leq |(\ell_2-\ell_1)c+b_2|\leq m\frac{2|b|^3+2b^2+4}{b^4-4}
\end{align*}
implying   that
\begin{eqnarray*}
m&\geq& \frac{b^4-4}{2(b^2+2|b|+2)}\\
&=& \frac{b^2}{2}-|b|+\frac{2b^2+4|b|-4}{2b^2+4|b|+4}\\
&=& c-|b|+\frac{2b^2+4|b|-4}{2b^2+4|b|+4}.
\end{eqnarray*}
Hence $m\geq c-|b|+1$ as $0<\frac{2b^2+4|b|-4}{2b^2+4|b|+4}<1$ and $m$ is integral. (see Figure \ref{fig8})

\medskip

(iii) If $b^2=c$, then $\theta=\frac{\pi}{3}$. By the similar discussion of (i) above, it follows that
\begin{eqnarray*}
\tilde{\alpha}=\sum_{i=1}^\infty|\alpha_i|=\frac{b^2+1}{|b|^3-1}
\end{eqnarray*}
and
\begin{eqnarray*}
m\geq \frac{|b|^3-1}{|b|+1}=b^2-|b|+\frac{|b|-1}{|b|+1}
=c-|b|+\frac{|b|-1}{|b|+1}.
\end{eqnarray*}
Hence $m\geq c-|b|+1$ as $0<\frac{|b|-1}{|b|+1}<1$. (see Figure \ref{fig9})

\medskip

(iv) If $b=0$, then $\theta=\frac{\pi}{2}$. Similarly, we have
\begin{eqnarray*}
\tilde{\alpha}=\sum_{i=1}^\infty|\alpha_i|=\frac{c}{\sqrt{c}}\sum_{i=1}^\infty r^{2i+1}=\frac{1}{c-1}
\end{eqnarray*}
and then $m\geq c-1$.

On the contrary, for the sufficiency, if $m$ satisfies \eqref{4.3}, then $m\geq |b|-1$  for cases (ii) and (iii). With the similar proof as in  Theorem \ref{thm3.1}, we can conclude that $v\in T-T$ and $T(A,\mathcal{D})$ is  connected. The connectedness of  case
(iv) comes from the Proposition \ref{thm2.2} directly.
\end{proof}

\begin{figure}[h]
 \centering
\subfigure[m=6]{
 \includegraphics[width=5cm]{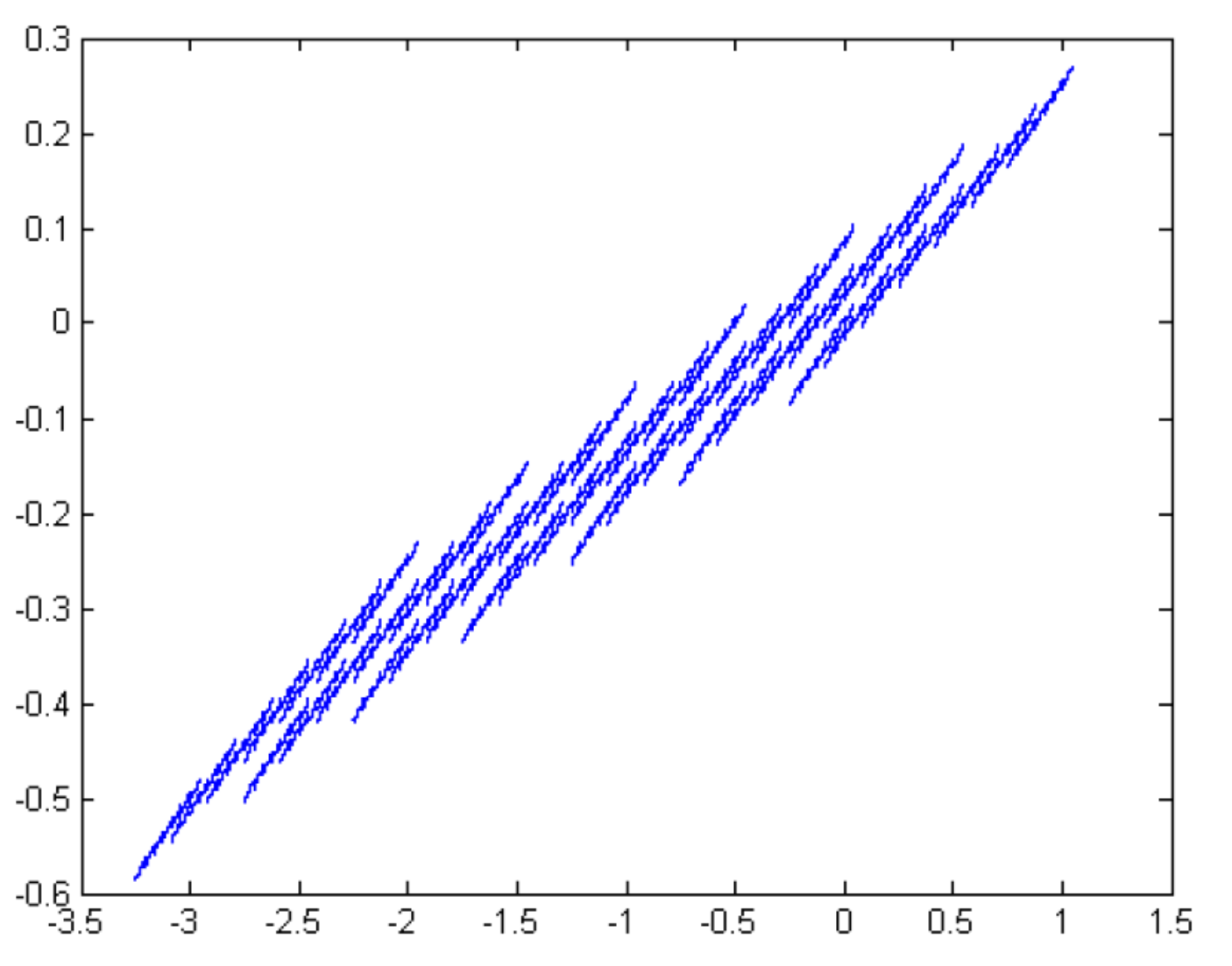}
 }\qquad
  \subfigure[m=7]{
 \includegraphics[width=5cm]{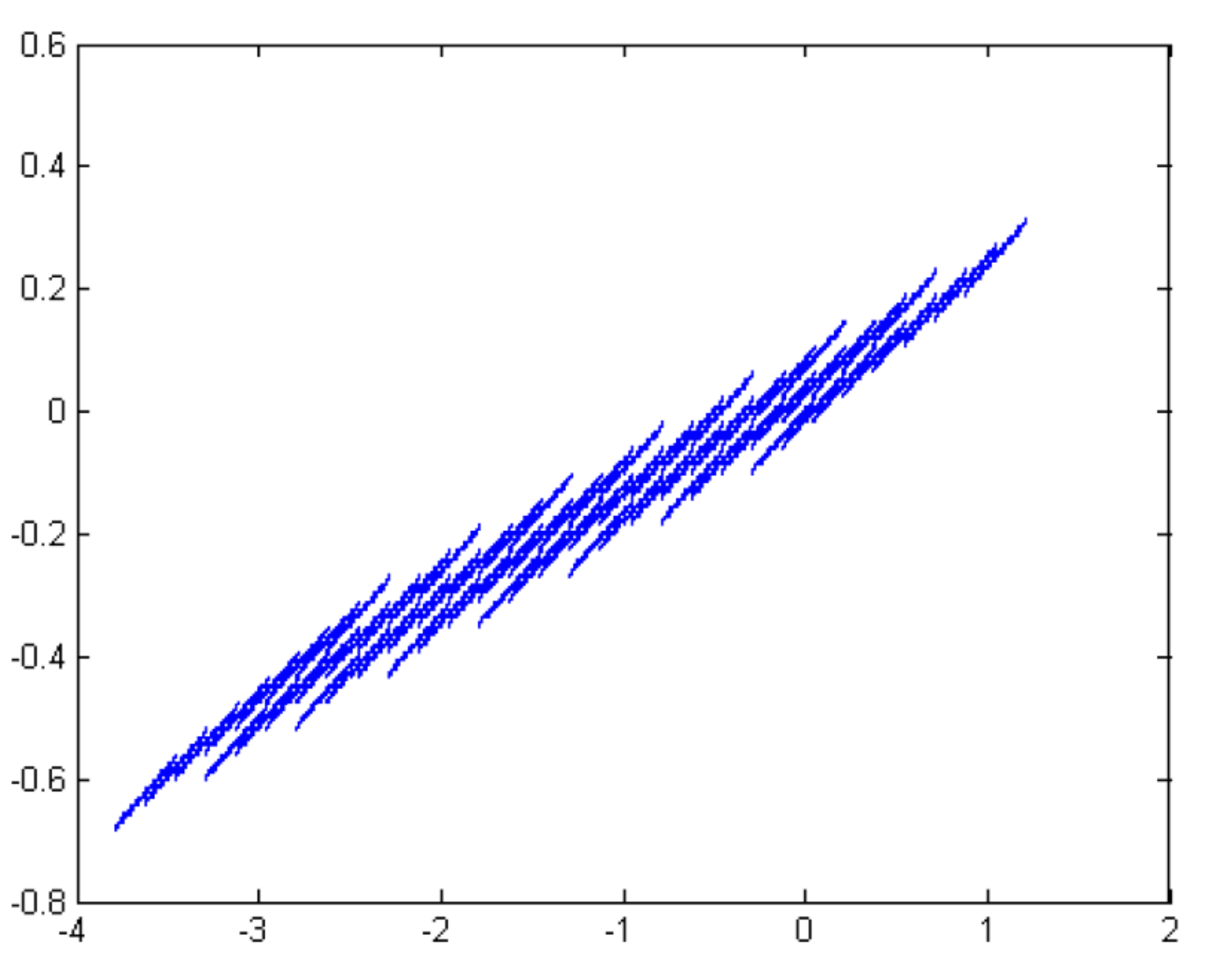}
}\caption{(a) is disconnected and (b) is connected where $A=[0,-12; 1,-6], v=(1,0)^t$.}\label{fig7}
\end{figure}

\begin{figure}[h]
 \centering
\subfigure[m=4]{
 \includegraphics[width=5cm]{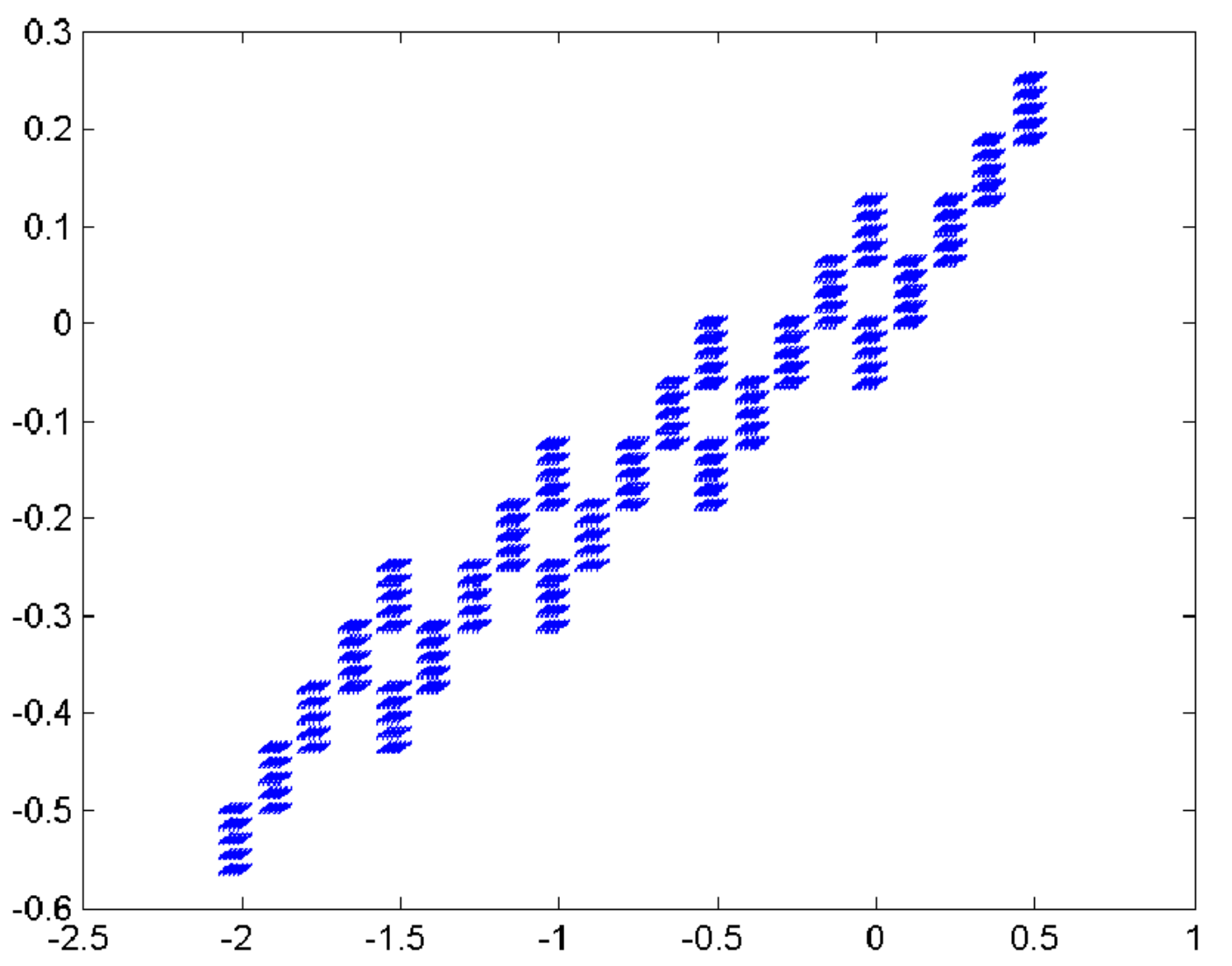}
 }\qquad
 \subfigure[m=5]{
 \includegraphics[width=5cm]{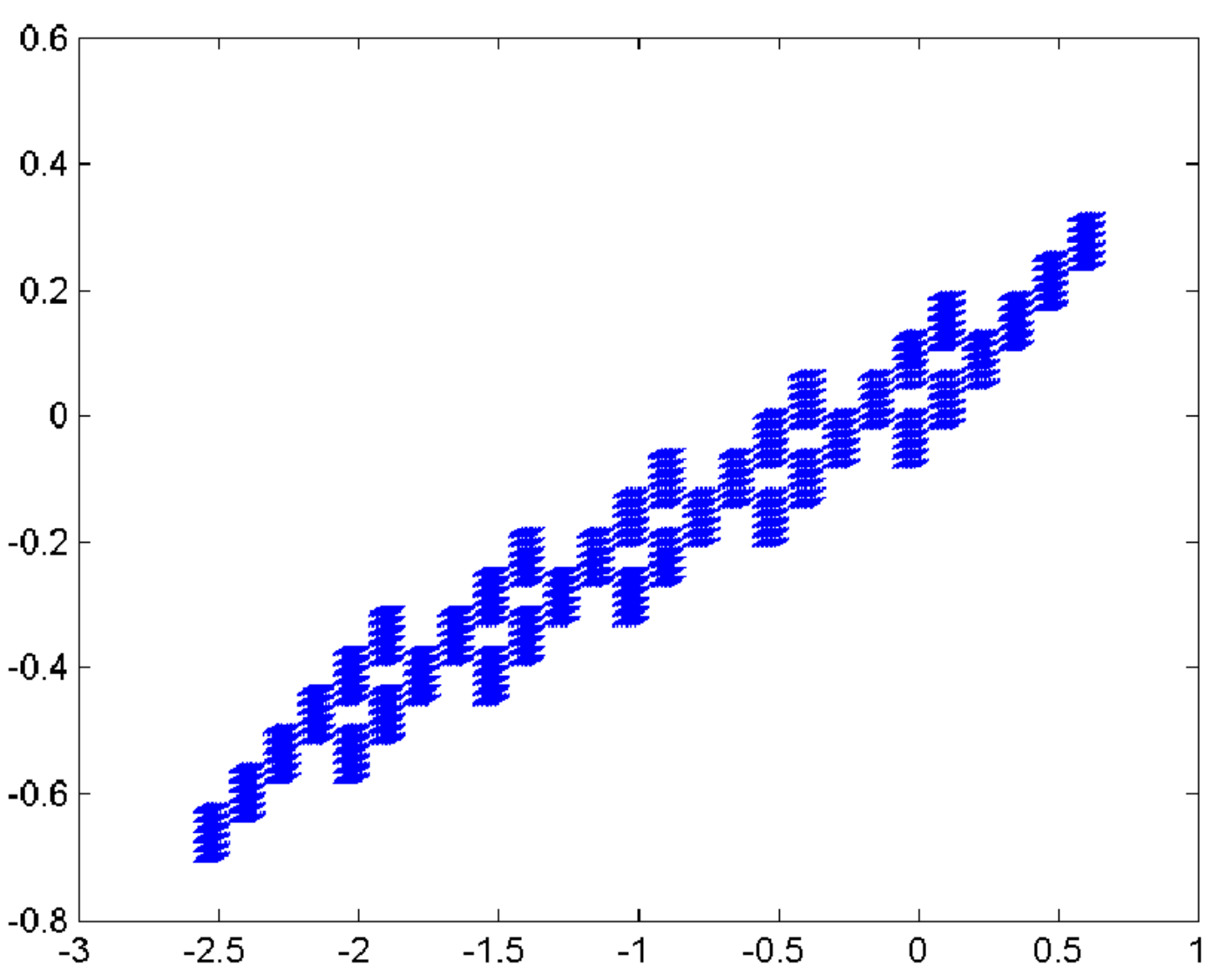}
}
\caption{(a) is disconnected and (b) is connected where  $A=[0,-8;1,-4]$, $v=(1,0)^t$.}\label{fig8}
\end{figure}

\begin{figure}[h]
 \centering
\subfigure[m=6]{
 \includegraphics[width=5cm]{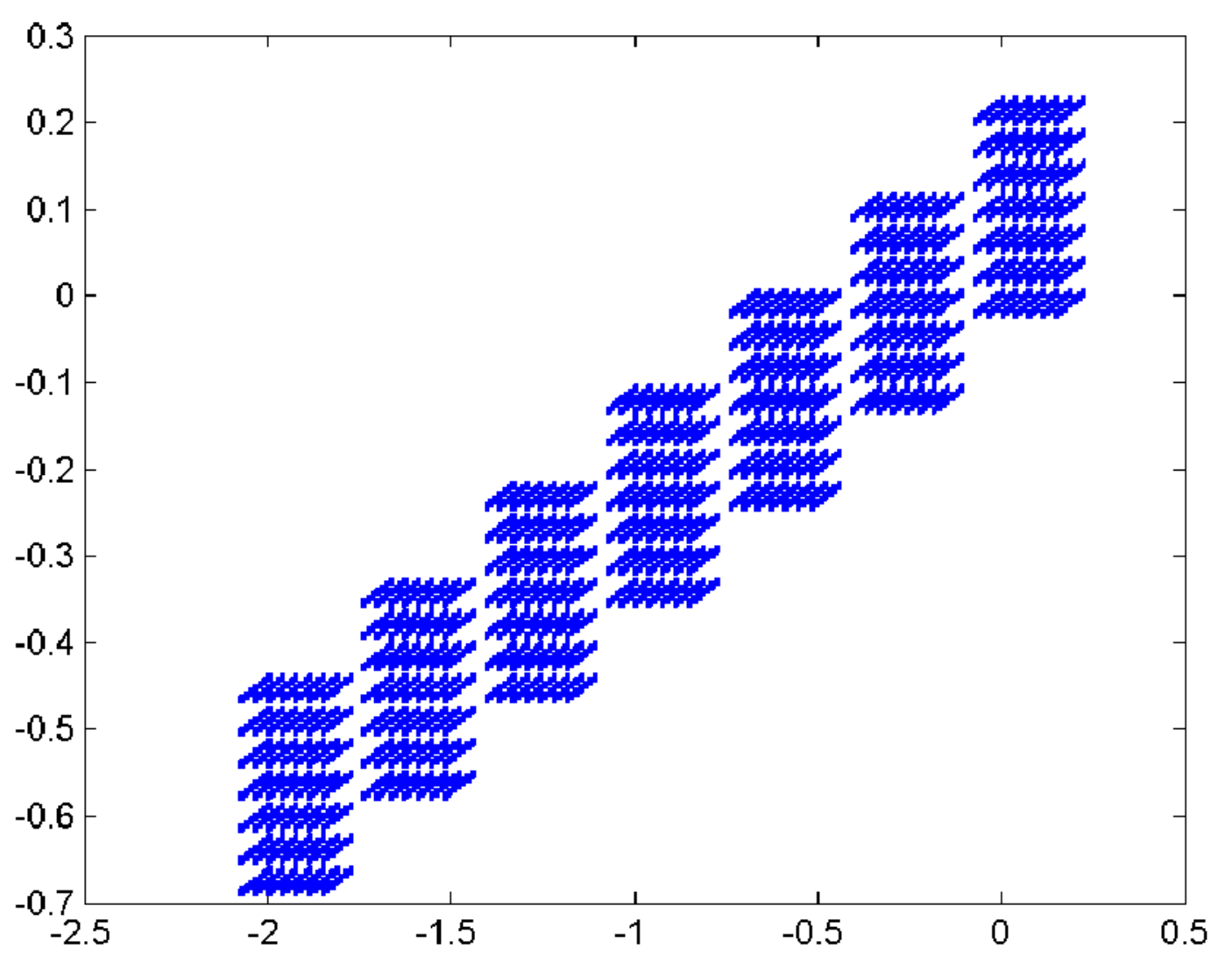}
 }\qquad
 \subfigure[m=7]{
 \includegraphics[width=5cm]{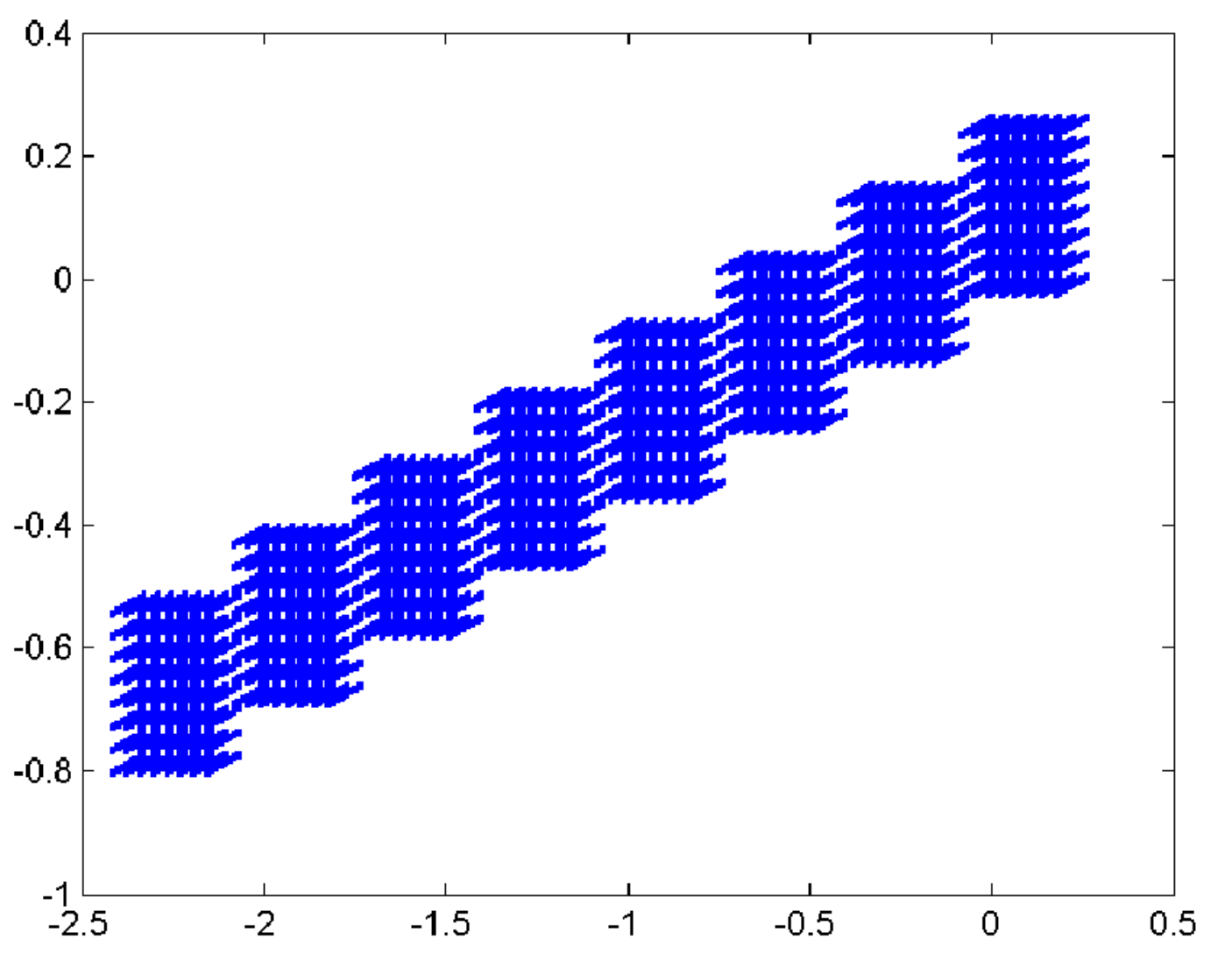}
}
\caption{(a) is disconnected and (b) is connected where $A=[0,-9;1,-3]$, $v=(1,0)^t$.}\label{fig9}
\end{figure}

\medskip

For other unsolved cases,  by observing computer graphs, we conclude with the following conjecture.

\begin{Coj}
Let the characteristic polynomial of $A$ be $f(x)=x^2+bx+c$ and a digit set $\mathcal{D}=\{0,1,\dots, m\}v$ where $m\geq 1$ and $v\in \mathbb{R}^2$ such that $\{v,  Av\}$ are linearly independent. Then

 \medskip
{\rm(i)}  if $c=|b|=4$, then $ T(A,\mathcal{D})$ is connected if and only if $m\geq 2$;
 \medskip

{\rm(ii)} otherwise, $ T(A,\mathcal{D})$ is connected if and only if
\begin{equation*}
m\geq \left\{
\begin{array}{ll}
\max\{c-|b|+1,|b|-1\} & \quad  c>0, |b|\geq 2 \\  \\
c-1 & \quad  c>0, |b|\leq 1 \\  \\
\max\{|c|-|b|-1,|b|+1\} & \quad  c<0.
\end{array}
\right.
\end{equation*}
\end{Coj}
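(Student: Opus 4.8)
\medskip

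The plan is to attack the conjecture with the same two ingredients that prove Theorems~\ref{thm3.1} and~\ref{thm3.2}: an algebraic construction for sufficiency and a neighbour-counting estimate for necessity. The sufficiency should in fact require no new idea. Every ``building-block'' identity used before is a consequence of the single relation $f(A)=A^2+bA+cI=0$, hence holds whatever the eigenvalues of $A$ are: for $c>0,\ |b|\ge2$ one has $(A-I)(A-(|b|-1)I)=-(c-|b|+1)I$ when $b<0$ (the mirror case follows from Lemma~\ref{lem-sym}), and since $c-|b|+1=(|x_1|-1)(|x_2|-1)$ is always a positive integer, the expansion $v=(|b|-1)A^{-1}v+\sum_{i\ge2}-(c-|b|+1)A^{-i}v$ lies in $T-T$ as soon as $m\ge\max\{c-|b|+1,|b|-1\}$. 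For $c<0$ the identity $A^2+A=(-b+1)(A+I)+(-c+b-1)I$ puts $v\in T-T$ once $m\ge\max\{|c|-|b|-1,|b|+1\}$, again because $|c|-|b|-1=(|x_1|+1)(|x_2|-1)$ is a positive integer. For $c>0,\ |b|\le1$ one invokes the Height Reducing Property, which holds in degree two (Akiyama and Gjini~\cite{AG}), so Proposition~\ref{thm2.2} gives connectedness for $m=c-1$; since $T-T$ grows with $m$ (as $\Delta D$ does), connectedness then persists for all $m\ge c-1$. Finally $c=|b|=4$ is exactly Theorem~\ref{thm3.1}(i). Thus the conjecture reduces to its necessity half.

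For the necessity one assumes $v\in T-T$, writes $v=\sum b_iA^{-i}v$ with $b_i\in\{0,\pm1,\dots,\pm m\}$, and runs the neighbour-generating recursion~\eqref{(2.1)} from $(\gamma,\delta)=(1,0)$, using the uniform bounds $|\gamma_n|\le m\tilde{\alpha}$, $|\delta_n|\le m\tilde{\beta}$. Exactly two families of $(b,c)$ remain uncovered: (a)~complex eigenvalues whose argument $\theta$ (with $\cos\theta=-b/(2\sqrt{c})$) is not a rational multiple of $\pi$ --- equivalently $b\ne0$ and $b^2\notin\{c,2c,3c\}$ --- and (b)~real eigenvalues with at least one modulus in the open interval $(1,2)$, e.g.\ $f(x)=x^2-5x+5$. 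In case~(b) Lemma~\ref{thm2.5} applies verbatim, so \eqref{3.3} (resp.\ \eqref{3.7}) already forces $m\ge c-|b|+1$ (resp.\ $m\ge|c|-|b|-1$); what is missing is only the bound $m\ge|b|-1$ for $c>0$ (resp.\ $m\ge|b|+1$ for $c<0$), and precisely here the estimate $\frac{m}{(|x_1|-1)(|x_2|-1)}<2$ of \eqref{3.5}, which used moduli $\ge2$, fails. The remedy should be to iterate~\eqref{(2.1)} past the second step: assuming $m\le|b|-2$ and tracking $(\gamma_n,\delta_n)$ for $n=1,2,3,\dots$, the smallness of $|\gamma_n|,|\delta_n|$ together with the integrality of the $b_i$ ought to pin the admissible digit strings down to a finite, explicitly enumerable list, producing a contradiction --- a bounded case-check in the spirit of~\eqref{3.6}.

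The genuine obstacle, and the reason the statement is only conjectural, is case~(a): without Lemma~\ref{thm2.5} there is no closed form for $\tilde{\alpha}$ or $\tilde{\beta}$, only the series $|\alpha_i|=\dfrac{2c\,r^{i+1}\,|\sin((i+1)\theta)|}{\sqrt{4c-b^2}}$ with $r=1/\sqrt{c}$, and when $\theta/\pi$ is irrational the sign pattern of $\sin(j\theta)$ is aperiodic. To run the necessity one must prove the sharp inequality $\tilde{\alpha}\le\dfrac{|b|-1}{c-|b|+1}$ (and its $c<0$ counterpart for $\tilde{\beta}$) directly from the series. Two routes seem plausible: (i)~a block estimate --- over any window of a few consecutive indices the angles $j\theta$ cannot all be near a multiple of $\pi$, which should yield a geometric bound on $\sum_{j\ge2}r^j|\sin j\theta|$ with the correct constant; or (ii)~a telescoping argument based on the recursion $c\alpha_{i+2}+b\alpha_{i+1}+\alpha_i=0$ of Lemma~\ref{thm2.3}, bounding partial sums $\sum_{i\le N}|\alpha_i|$ without ever factoring $f$. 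Because the constant must come out exactly right for the threshold to be tight, a preliminary computer study of the contact (neighbour) graph for all admissible $(b,c)$ with $|b|+|c|$ small would be worthwhile, both to confirm the conjecture and to indicate which estimate actually suffices; it is even conceivable that for a handful of exceptional pairs the true threshold differs slightly and the formula needs minor correction.
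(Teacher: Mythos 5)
This statement is the paper's concluding \emph{Conjecture}: the authors offer no proof of it, only computer-generated pictures as evidence, so there is no argument of theirs to measure yours against. You have, quite correctly, not produced a proof either --- your text is a roadmap that isolates exactly where the known techniques stop working, and on that score it is accurate. Your sufficiency sketch is sound and is literally the paper's own sufficiency argument from Theorems \ref{thm3.1} and \ref{thm3.2}: the identities $(A-I)(A+(b+1)I)=-(c+b+1)I$ and $A^2+A=(-b+1)(A+I)+(-c+b-1)I$ depend only on $f(A)=0$, not on the size of the eigenvalues, and the monotonicity of $T-T$ in $m$ together with Proposition \ref{thm2.2} (HRP holds in degree $2$ by \cite{AG}) handles $c>0$, $|b|\le 1$ at the threshold $m=c-1$. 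You also correctly identify the two uncovered regimes for necessity: real eigenvalues with a modulus in $(1,2)$, where Lemma \ref{thm2.5} still gives $m\ge c-|b|+1$ (resp.\ $m\ge |c|-|b|-1$) but the step \eqref{3.5}--\eqref{3.6} producing $m\ge |b|-1$ collapses, and complex eigenvalues with $\theta/\pi$ irrational, where no closed form for $\tilde\alpha,\tilde\beta$ is available at all.

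The genuine gaps are the two you name, and neither of your proposed remedies is carried far enough to count as a proof. For regime (b), "iterate \eqref{(2.1)} and enumerate a finite list" is plausible but unsubstantiated: you would need to show the recursion with the bounds $|\gamma_n|\le m\tilde\alpha$, $|\delta_n|\le m\tilde\beta$ actually closes off, and for eigenvalues near $1$ these bounds blow up, so the case-check is not obviously finite uniformly in $(b,c)$. For regime (a), the difficulty is sharper than you state: equidistribution of $j\theta$ gives only the averaged asymptotics of $\sum r^j|\sin j\theta|$, whereas the conjectured thresholds require the \emph{exact} inequality $\tilde\alpha\le \frac{|b|-1}{c-|b|+1}$ with no slack, and a block or telescoping estimate will generically lose a constant. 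Your closing caveat --- that the formula might need correction for exceptional $(b,c)$ --- is the honest conclusion; the statement remains open, and your proposal should be read as a correct diagnosis of why, not as a proof.
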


\bigskip

\section{\bf Non-consecutive collinear digit set}

By previous Section 2, we know that if $A\in M_2({\mathbb Z})$ is an expanding matrix, then its characteristic polynomial has HRP. Let ${\mathcal{D}'}=\{0,1,\dots,(|\det(A)|-1)\}v$ be a consecutive collinear digit set with $\#{\mathcal D}'=|\det(A)|$. By Proposition \ref{thm2.2},
then the associated self-affine tile $T(A, {\mathcal D}')$ is always connected. However there  are few results on the non-consecutive collinear digit sets.
In \cite{LLu1}, Leung and one of the authors first study this case,  by checking $10$ eligible characteristic polynomials of the $A$ with $|\det A|=3$ case by case, they obtained a complete characterization for connectedness of $T(A,\mathcal{D})$ with $\mathcal{D}=\{0,1,m\}v$. In the section, we further study this kind of digit sets in more general situations. Suppose the characteristic polynomial of $A$ is of the form $f(x)=x^2-(p+q)x+pq$ where $|p|,|q|\geq 2$ are integers,  and $\mathcal{D}=\{0,1, \dots, |pq|-2, |pq|-1+s\}v$. By letting  $$f_1(x)=x^2\pm 4x+4 \quad\text{and}\quad f_2(x)=x^2\pm7x+12,$$ we have the following criterion for the connectedness.

\medskip

\begin{thm} \label{thm3.2}
Let the characteristic polynomial of $A$ be $f(x)=x^2-(p+q)x+pq$ and a digit set $\mathcal{D}=\{0,1, \dots, |pq|-2, |pq|-1+s\}v$ where $s\geq 0$, $|p|,|q|\geq 2$ are integers and $v\in \mathbb{R}^2$ such that $\{v,  Av\}$ are linearly independent. Then

 \medskip

{\rm(i)} if  $f\neq f_1, f_2$, then $T(A,\mathcal{D})$ is connected if and only if $s=0$;

\medskip
{\rm(ii)} if  $f= f_1$ or $f_2$ , then $T(A,\mathcal{D})$ is connected if and only if $s=0$ or $1$.
\end{thm}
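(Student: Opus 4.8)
The plan rests on the factorisation $f(x)=(x-p)(x-q)$, which gives $\Delta=(p-q)^2\ge 0$, real roots $p,q$ of moduli $\ge 2$, and — via Lemma~\ref{lem-sym} — the freedom to normalise the sign of $p+q$; I keep $b:=-(p+q)\le 0$ and distinguish $pq>0$ (reducing to $2\le p\le q$) from $pq<0$. By the connectedness criterion for arbitrary digit sets (Hata~\cite{Ha}; see also~\cite{KL}), $T(A,\mathcal D)$ is connected iff the adjacency graph on the digits $D=\{0,1,\dots,|pq|-2\}\cup\{|pq|-1+s\}$ is connected, where $i\sim j$ iff $(i-j)v\in T-T$. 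The only possible neighbours of the isolated digit $|pq|-1+s$ are those $j\in\{0,\dots,|pq|-2\}$ with $\bigl(|pq|-1+s-j\bigr)v\in T-T$, so that digit forms a component of its own — and $T$ is disconnected — whenever $kv\notin T-T$ for every integer $k$ with $1+s\le k\le |pq|-1+s$; conversely, if some such $kv\in T-T$ and also $v\in T-T$ (so that $\{0,\dots,|pq|-2\}$ is strung into a single path), the graph is connected. Thus everything reduces to deciding which multiples $kv$, $1+s\le k\le|pq|-1+s$, lie in $T-T$.

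\medskip

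\noindent\textbf{The controlling estimate.} Suppose $kv=\sum_{i\ge1}A^{-i}b_iv\in T-T$ with $b_i\in\Delta D$ and $k\ge1$, so $M:=\max_i|b_i|\le|pq|-1+s$. Applying $A$ once, $l_1:=\sum_{i\ge1}A^{-i}b_{i+1}v\in T-T$ has $Av$-coordinate $\gamma_0-b\delta_0=k$, whence $k=\bigl|\sum_{i\ge1}b_{i+1}\beta_i\bigr|\le M\tilde\beta\le(|pq|-1+s)\tilde\beta$; applying $A$ once more and using $\delta_2=-b_1-b\delta_1=-b_1+k|b|$ with $|\delta_2|\le M\tilde\beta$ gives also $k|b|\le|\delta_2|+|b_1|\le(|pq|-1+s)(1+\tilde\beta)$. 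Lemma~\ref{thm2.5} (with $x_1=p,\ x_2=q$) yields $\tilde\beta=\tfrac1{(|p|-1)(|q|-1)}$ when $pq>0$ and $\tilde\beta=\tfrac1{(u+1)(w-1)}$, $u=\max\{|p|,|q|\},\ w=\min\{|p|,|q|\}$, when $pq<0$. Substituting and comparing with $k\ge1+s$: the first bound already forces $k<1+s$ — hence disconnectedness for all $s\ge1$ — whenever $pq<0$, and whenever $pq>0$ with $(p-2)(q-2)\ge3$. In the remaining (finitely many) cases $pq>0$, $(p-2)(q-2)\le2$, i.e. $f=(x-2)(x-q)\ (q\ge2)$, $f=(x-3)^2$, or $f=(x-3)(x-4)$, the second bound $k\le(|pq|-1+s)(1+\tilde\beta)/|b|$ is the effective one, and a routine computation shows it forces $k<1+s$ for $f=(x-2)(x-q)$ with $q\ge3$ and for $f=(x-3)^2$ (all $s\ge1$) and for $f=f_1,f_2$ (all $s\ge2$), while it degenerates to the equality $(|pq|-1+s)(1+\tilde\beta)/|b|=1+s=2$ exactly at $(f,s)=(f_1,1)$ and $(f_2,1)$. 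This yields the ``only if'' parts of (i) and (ii): if $f\ne f_1,f_2$ then $s\ne0$ is impossible, and if $f=f_1$ or $f_2$ then $s\ge2$ is impossible.

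\medskip

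\noindent\textbf{Sufficiency.} For $s=0$ the digit set $\{0,1,\dots,|pq|-1\}v$ is consecutive and $f$ (the characteristic polynomial of an expanding $A\in M_2(\mathbb{Z})$) has the Height Reducing Property, so $T(A,\mathcal D)$ is connected by Proposition~\ref{thm2.2}. For $s=1$ with $f=f_1$ or $f_2$ I would verify directly from $f(A)=0$ that $v\in T-T$ — taking $v=2A^{-1}v+2A^{-2}v-2\sum_{i\ge3}A^{-i}v$ for $f_1$ and $v=6A^{-1}v-6\sum_{i\ge2}A^{-i}v$ for $f_2$ — together with
\[
2v=4A^{-1}v+4A^{-2}v-4\sum_{i\ge3}A^{-i}v\ \ (f=f_1),\qquad
2v=12A^{-1}v-12\sum_{i\ge2}A^{-i}v\ \ (f=f_2),
\]
all coefficients lying in $\Delta D$ (which equals $\{0,\pm1,\dots,\pm4\}$, resp. $\{0,\pm1,\dots,\pm12\}$, when $s=1$). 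Then $v\in T-T$ strings $\{0,1,\dots,|pq|-2\}$ into a path and $2v\in T-T$ joins the remaining digit $|pq|-1+1$ to the digit $|pq|-2$, so the adjacency graph is connected and $T(A,\mathcal D)$ is connected.

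\medskip

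I expect the main obstacle to be the borderline step: the bounds coming from $\tilde\alpha,\tilde\beta$ are not sharp, so for the polynomials with $(p-2)(q-2)\le2$ one must squeeze the extra inequality out of a second iteration of~\eqref{(2.1)} and check, for each of the few surviving values of $k$, that it is excluded — while simultaneously pinning down that equality (hence a genuine exception) occurs precisely for $f_1$ and $f_2$ at $s=1$, for which the explicit expansions above must be exhibited. Some bookkeeping is also needed to keep the estimate and the coefficient-membership checks uniform across the sign normalisations, in particular in the degenerate sub-case $p=-q$ (i.e. $b=0$), which falls under the $pq<0$ analysis and where only the first bound is available.
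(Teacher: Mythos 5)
Your proposal is correct and follows essentially the same route as the paper's proof: the same neighbor--iteration estimates (your two bounds $k\le M\tilde\beta$ and $k|b|\le M(1+\tilde\beta)$, read off from $\delta_1$ and $\delta_2$, are algebraically identical to the paper's inequalities obtained from the two coordinates of $l^{*}$, since $1+\tilde\alpha=|c|\,\tilde\beta$ by Lemma~\ref{thm2.5}), the same identification of $f_1,f_2$ as the borderline cases where the bound degenerates to equality at $s=1$, and the same sufficiency argument via Proposition~\ref{thm2.2} for $s=0$ and via explicit expansions of $v$ and $2v$ with coefficients in $\Delta D$ for $s=1$. If anything, your case split by $(p-2)(q-2)$ and systematic use of the second bound is slightly tidier, since it explicitly disposes of $f=x^2\mp6x+9$, which the paper's discussion of the quantity $t$ leaves to the reader.
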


\begin{proof}
If $s=0$, then  $ T(A, \mathcal{D})$ is always connected by Proposition \ref{thm2.2}.

\medskip

\noindent (i) Suppose $ T(A, \mathcal{D})$ is connected, then $(T+(|pq|-1+s)v)\cap (T+iv)\neq \emptyset$ for some $0\leq i\leq |pq|-2$. Let $r=|pq|-1+s-i$, then $rv\in T-T$, i.e., $rv=\sum_{i=1}^{\infty}b_iA^{-i}v$ where $b_i\in \Delta D$. By \eqref{(2.1)}, we obtain a new neighbor $T+l^*$ where $l^*=-(rpq+b_2)v +(r(p+q)-b_1)Av$.

 \medskip

If $pq>0$, then by (\ref{estimate}) and Lemma \ref{thm2.5}, we have
\begin{eqnarray}\label{3.8}
(1+s)|pq|-(|pq|-1+s)&\leq&  r|pq|-(|pq|-1+s)\leq  |rpq+b_2| \nonumber \\
&\leq &(|pq|-1+s )\frac{|p|+|q|-1}{(|p|-1)(|q|-1)};
\end{eqnarray}

\begin{align}\label{3.9}
(1+s)|p+q|-(|pq|-1+s)\leq |r(p+q)-b_1|\leq \frac{|pq|-1+s}{(|p|-1)(|q|-1)}.
\end{align}

It follows from (\ref{3.8}) that
\begin{align}\label{3.10}
s\leq \frac{|p|+|q|-2}{|pq|-|p|-|q|}.
\end{align}
 Let
\begin{align*}
t=\frac{|p|+|q|-1}{(|p|-1)(|q|-1)}=\frac{1}{|p|-1}+\frac{1}{|q|-1}+\frac{1}{(|p|-1)(|q|-1)}.
\end{align*}
It is easy to see that $t<1$ if $|p|,|q|\geq 4$ or  one of $|p|,|q|$  is equal to $3$ and the other one is larger than $5$. Therefore
$|p|+|q|-2 < |pq|-|p|-|q|$, and $s<1$, i.e., $s=0$. (see Figure \ref{fig4})

If  one of $|p|, |q|$ is equal to $2$ and the other one is larger than $3$, without loss of  generality, suppose $|p|=2$ and $|q|\geq 3$. From \eqref{3.9} we get
\begin{align}\label{3.11}
s\leq\frac{|q|^2-2|q|+2}{|q|^2-2}=1-\frac{2|q|-4}{|q|^2-2}<1.
\end{align} Hence $s=0$.

\medskip

If $pq<0$, analogous to the \eqref{3.8}, then
\begin{eqnarray}
(1+s)|pq|-(|pq|-1+s)\leq (|pq|-1+s)\frac{|p+q|+1}{|pq|-|p+q|-1}.
\end{eqnarray}
We have
\begin{align}
s\leq\frac{|p+q|}{|pq|-|p+q|-2}=1-\frac{|pq|-2|p+q|-2}{|pq|-|p+q|-2}.
\end{align}
Since $pq<0$,  without loss of generality, we let $|p|\geq|q|$, it follows that $|pq|-|p+q|-1=|pq|-(|p|-|q|)-1=(|p|+1)(|q|-1)\geq |p|+1>|p|-|q|+1=|p+q|+1$. Thus $|pq|-2|p+q|-2>0$, and $s=0$.

\medskip

\noindent (ii) If  $f= f_1$, then \eqref{3.11} implies that $s\leq 1$ (see Figure \ref{fig5}); if $f=f_2 $, then \eqref{3.10} implies that $s\leq 1$ (see Figure \ref{fig6}). Conversely, for $s=1$, let $\Delta D_1=\{0,\pm 1,\pm2, \pm3,\pm4\}$ and $\Delta D_2=\{0,\pm1,\dots, \pm 12\}$ and let $A_1$ and $A_2$ denote the matrices of $f_1$ and $f_2$  respectively. We only need to show  that $v, \ 2v\in T-T$ (see \cite{KL} or \cite{LLu1}). Let $\Delta D_1'=\{0,\pm1,\pm2\}$ and $\Delta D_2'=\{0,\pm1,\dots, \pm6\}$. By Theorem \ref{thm3.1}, there exist sequences $\{b_{1i}\}_{i=1}^\infty$ where $b_{1i}\in\Delta D_1'$ and $\{b_{2i}\}_{i=1}^\infty$ where $b_{2i}\in\Delta D_2'$ such that $v=\sum_{i=1}^{\infty}b_{1i}A_1^{-i}v\in T-T$ and $v=\sum_{i=1}^{\infty}b_{2i}A_2^{-i}v\in T-T$. Moreover, $2b_{1i}\in \Delta D_1$ and $2b_{2i}\in \Delta D_2$. Hence $2v\in T-T$ as well.
\end{proof}

\begin{figure}[h]
 \centering
\subfigure[s=0]{
 \includegraphics[width=5cm]{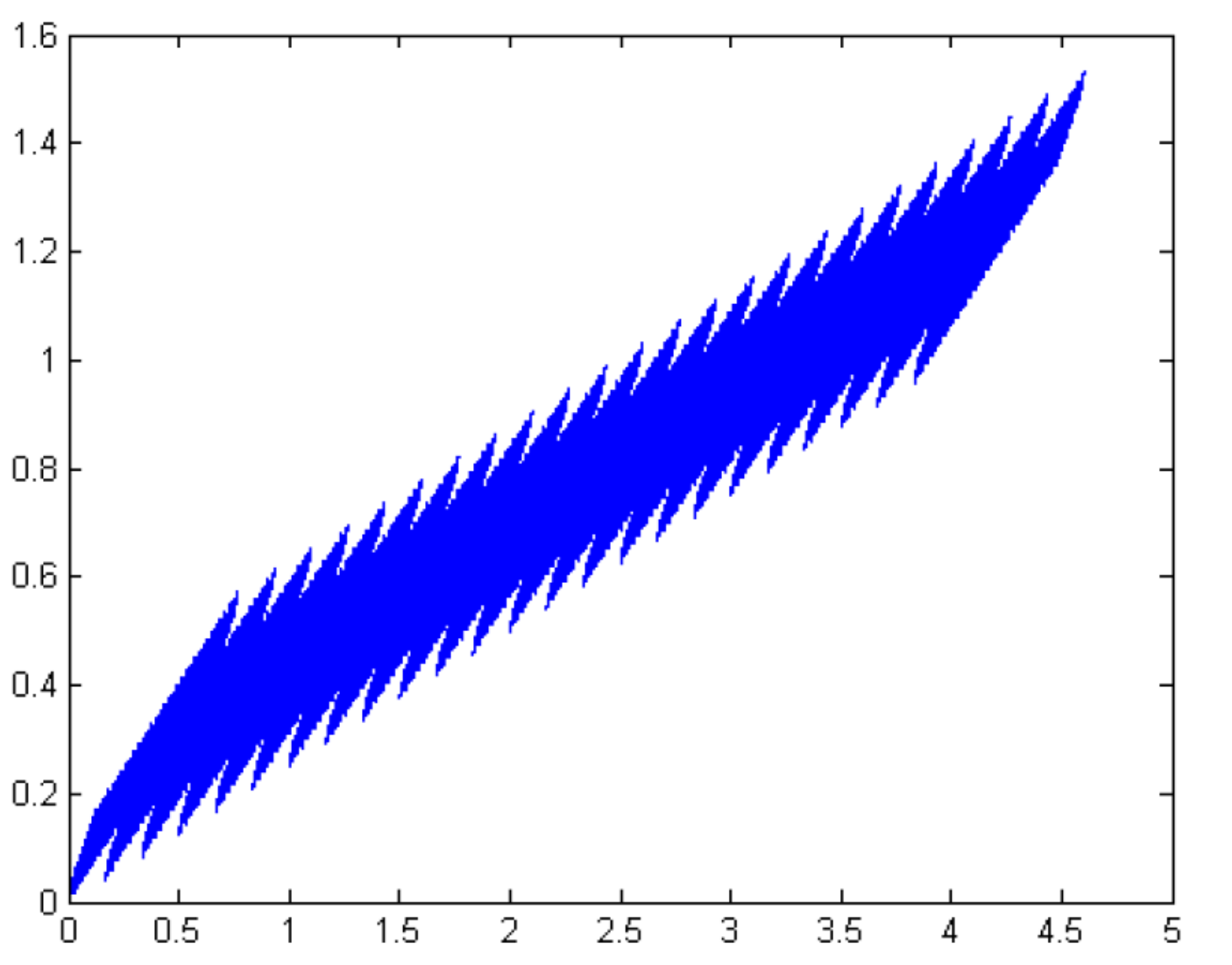}
 }\qquad
 \subfigure[s=1]{
 \includegraphics[width=5cm]{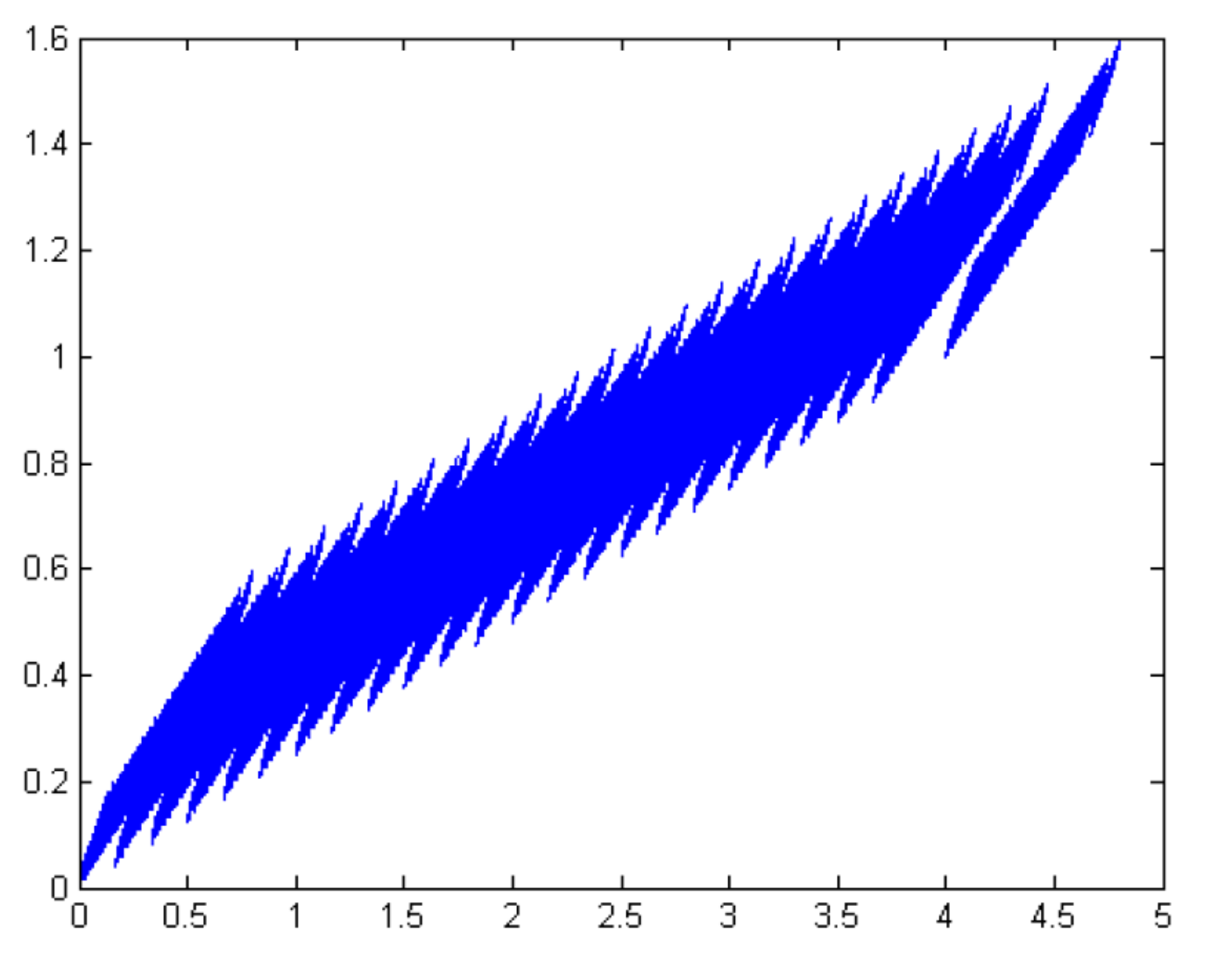}
}
\caption{(a) is connected and (b) is disconnected where $A=[6,0;-1,4], v=(1,0)^t$.}\label{fig4}
\end{figure}

\begin{figure}[h]
 \centering
\subfigure[s=1]{
 \includegraphics[width=5cm]{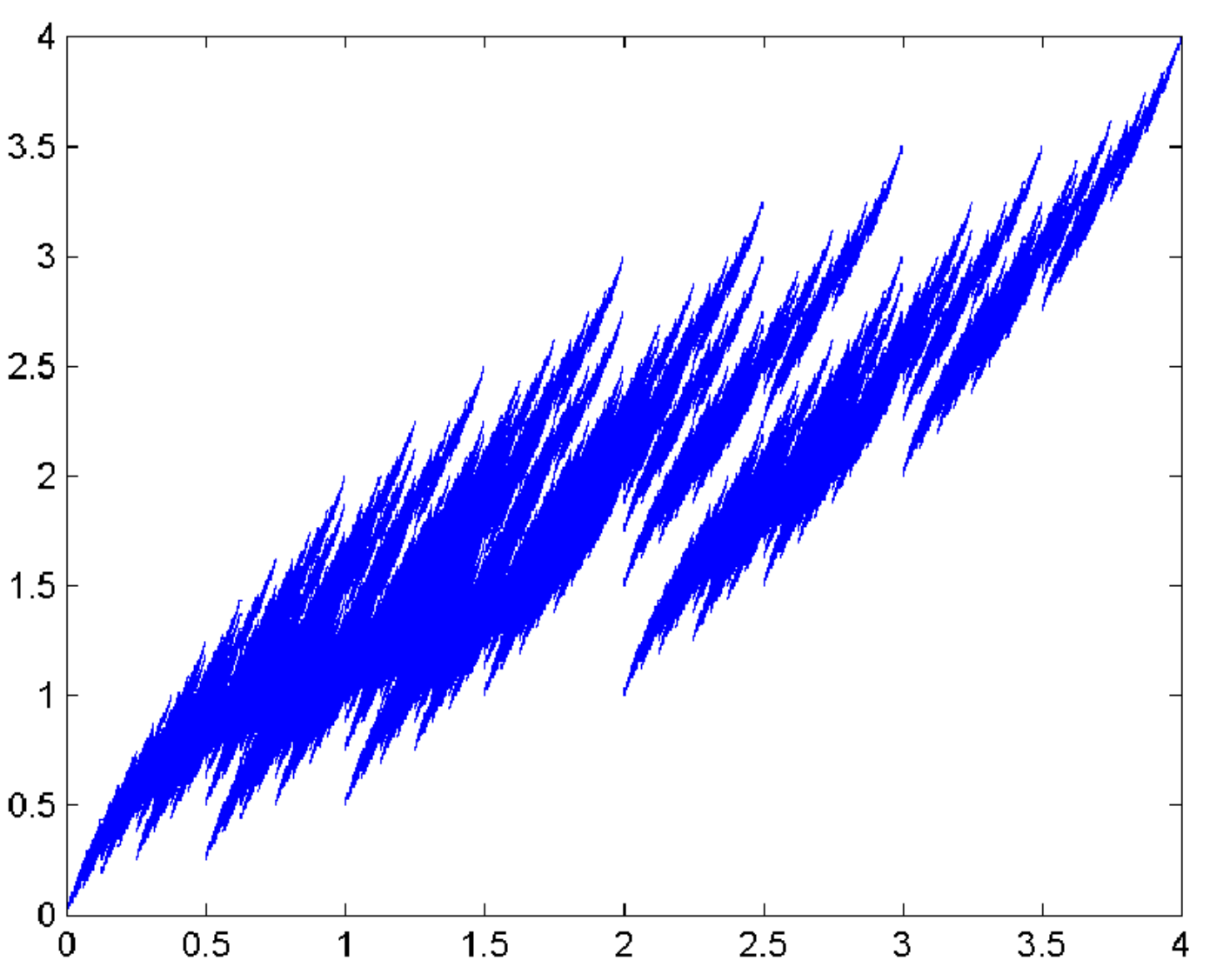}
 }\qquad
 \subfigure[s=2]{
 \includegraphics[width=5cm]{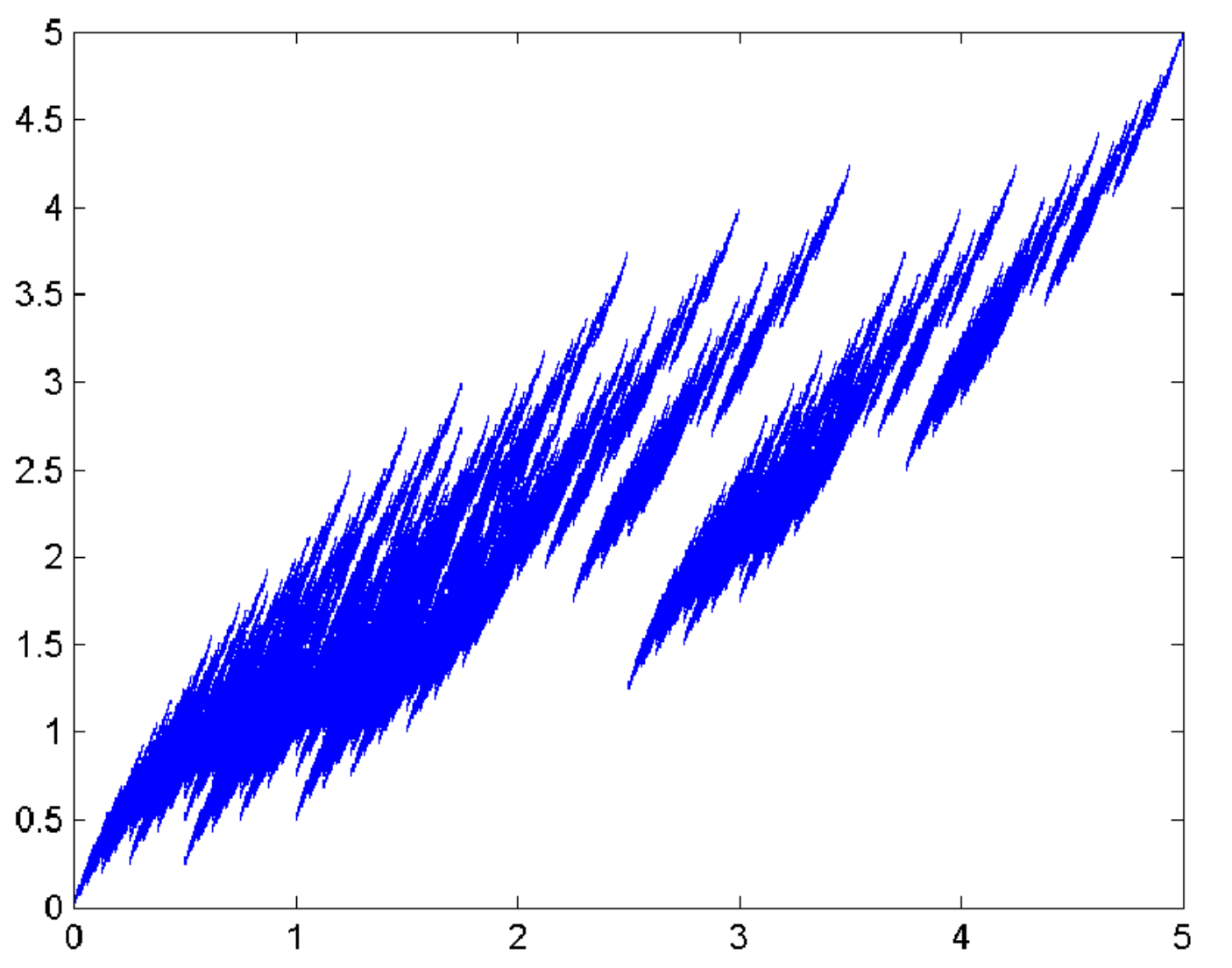}
}
\caption{(a) is connected and (b) is disconnected where $A=[2,0;-1,2], v=(1,0)^t$.}\label{fig5}
\end{figure}

\begin{figure}[h]
 \centering
\subfigure[s=1]{
 \includegraphics[width=5cm]{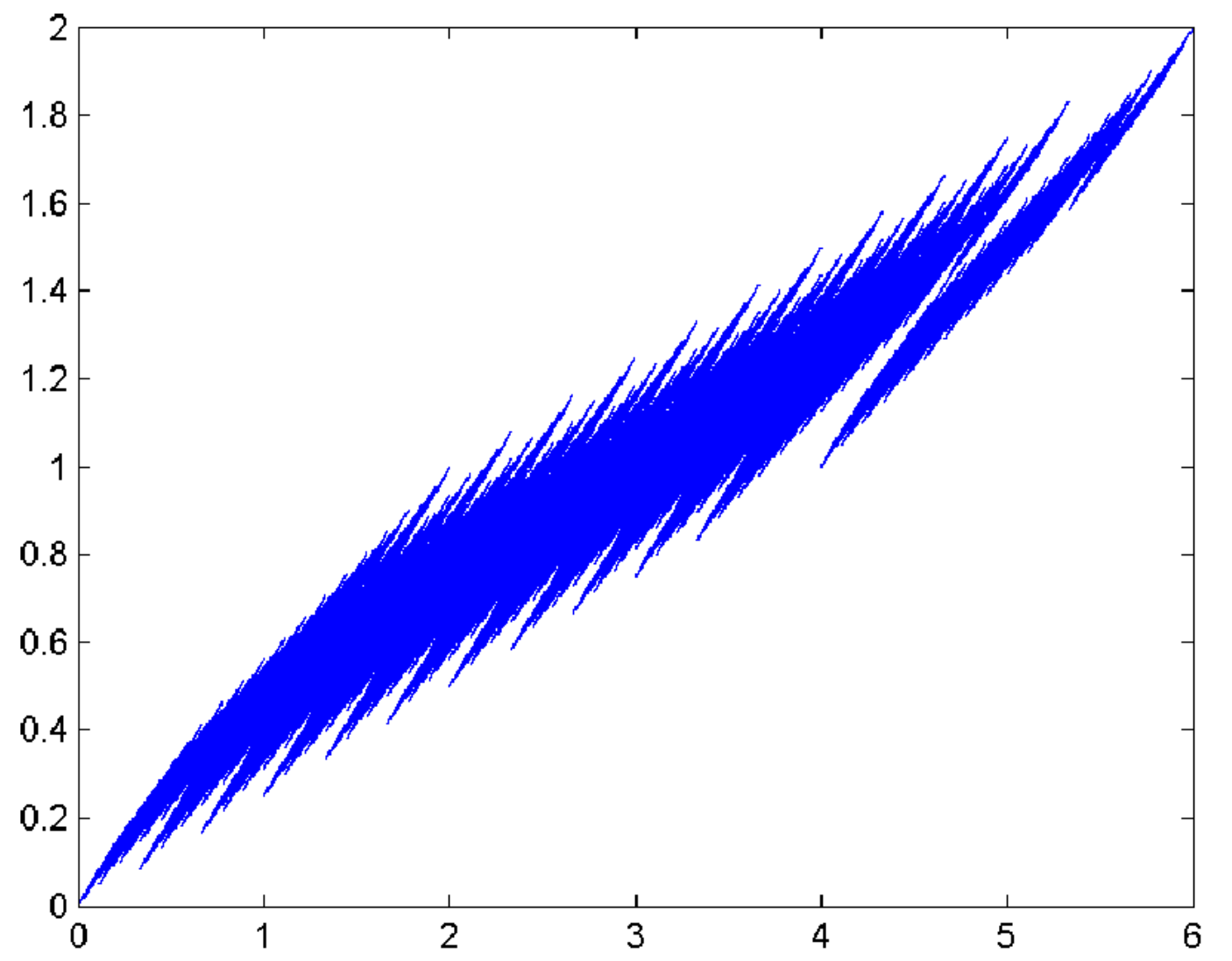}
}\qquad
 \subfigure[s=2]{
 \includegraphics[width=5cm]{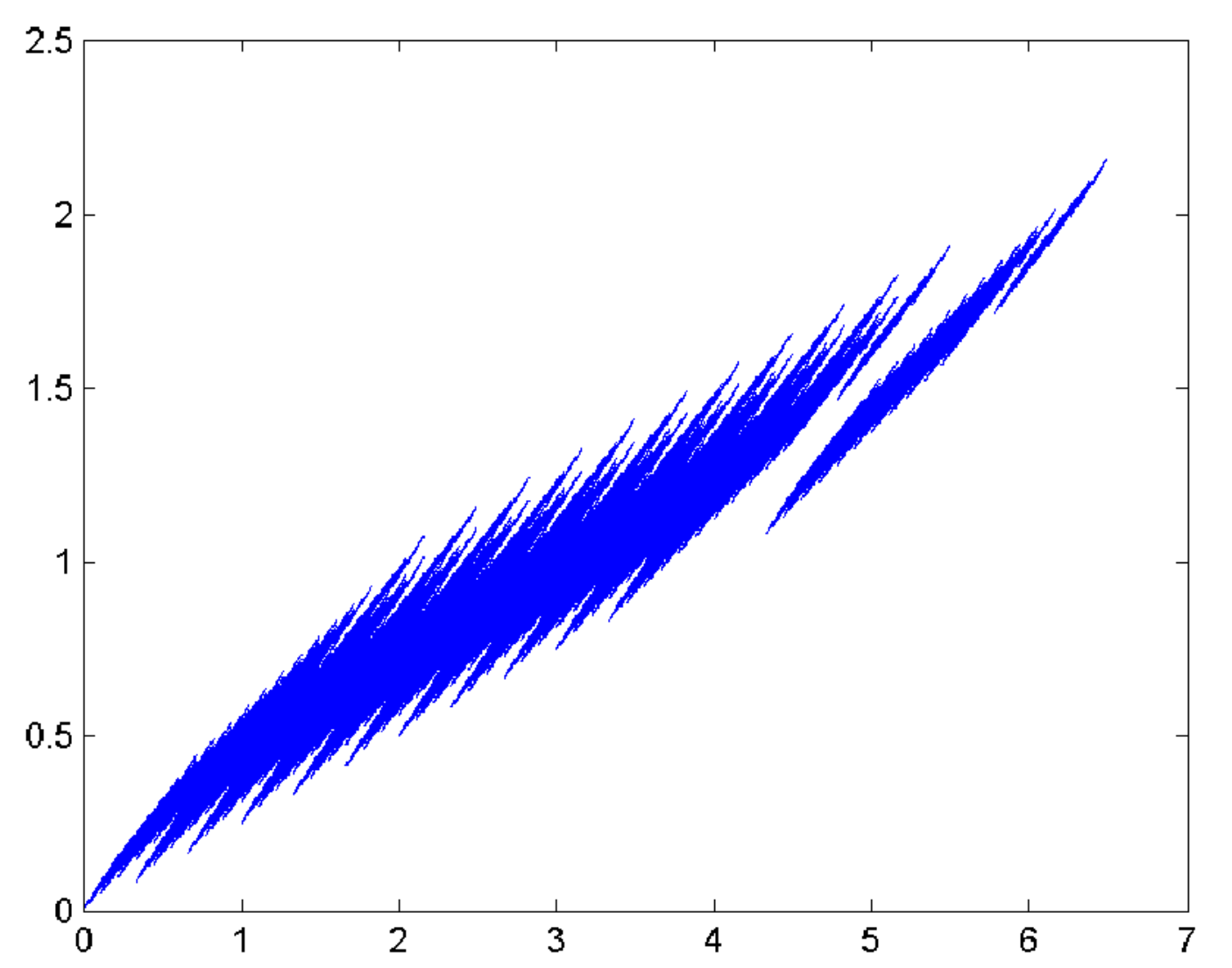}
}
\caption{(a) is connected and (b) is disconnected where $A=[3,0;-1,4], v=(1,0)^t$.}\label{fig6}
\end{figure}

\bigskip

\end{document}